\newtheorem{theorem}{Theorem}[section]
\newtheorem{pro}[theorem]{Proposition}
\newtheorem{lem}[theorem]{Lemma}
\newtheorem{coro}[theorem]{Corollary}
\theoremstyle{definition}
\newtheorem*{rem}{Remark}
\newtheorem{exam}[theorem]{Example}
\newtheorem{defi}[theorem]{Definition}
\def\DD{D}
\def\00{\mathbf{0}}
\def\Z{\mathbb{Z}}
\def\End{\hbox{\rm End}}
\def\DD{{\mathcal D}}
\begin{document}
\title{Flow modules and nowhere-zero flows}
\author{Junyang Zhang and Na Lu}
\affil{School of Mathematical Sciences, Chongqing Normal University, Chongqing 401331, P. R. China}
\date{}

\openup 0.5\jot
\maketitle
 \footnotetext{E-mail address: jyzhang@cqnu.edu.cn (Junyang Zhang), 528714499@qq.com (Na Lu)}
\begin{abstract}
Let $\Gamma$ be a graph, $A$ an abelian group, $\DD$ a given orientation of  $\Gamma$ and $R$ a unital subring of the endomorphism ring of $A$. It is shown that the set of all maps $\varphi$ from $E(\Gamma)$ to $A$ such that $(\DD,\varphi)$ is an $A$-flow forms a left $R$-module. Let $\Gamma$ be a union of two subgraphs $\Gamma_{1}$ and $\Gamma_{2}$, and $p^n$ a prime power. It is proved that $\Gamma$ admits a nowhere-zero $p^n$-flow if $\Gamma_{1}$ and $\Gamma_{2}$ have at most $p^n-2$ common edges and both have nowhere-zero $p^n$-flows. More important, it is proved that $\Gamma$ admits a nowhere-zero $4$-flow if $\Gamma_{1}$ and $\Gamma_{2}$ both have nowhere-zero $4$-flows and their common edges induce a connected subgraph of $\Gamma$ of size at most $3$. It is also proved that a graph of which every edge is contained in a cycle of length at most $4$ admits a nowhere-zero 4-flow.

\medskip
{\em Keywords:} Integer flow; nowhere-zero flow; module

\medskip
{\em MSC2020:} 05C21, 05C25
\end{abstract}
\section{Introduction}
\label{sec:int}
Throughout this paper, for any abelian group, the operation is written additively and the identity element is denoted as $0$. All graphs considered are finite and undirected with no loops, possibly with multiple edges. For a graph $\Gamma$, we use $V(\Gamma)$ and $E(\Gamma)$ to denote its vertex set and edge set respectively. An \emph{orientation} $\DD$ of a graph $\Gamma$ is a digraph obtained from $\Gamma$ by endowing each edge of $\Gamma$ with one of the two possible orientations. For a vertex $v\in V(\Gamma)$, we use $\DD^{+}(v)$ to denote the set of edges of $\Gamma$ being endowed orientation with tail at $v$ and $\DD^{-}(v)$ the set of edges of $\Gamma$ being endowed orientation with head at $v$. Let $A$ be an abelian group, and $\varphi$ a map from $E(\Gamma)$ to $A$. Write
\begin{equation*}
  \varphi^{+}(v):=\sum\limits_{e\in \DD^{+}(v)}\varphi(e)~~\mbox{and}~~
  \varphi^{-}(v):=\sum\limits_{e\in \DD^{-}(v)}\varphi(e)
\end{equation*}
for every $v\in V(\Gamma)$, where the sums are meant to be group-theoretical addition performed in $A$.
The ordered pair $(\DD, \varphi)$ is called an $A$-\emph{flow} of $\Gamma$ if $\varphi^{+}(v)=\varphi^{-}(v)$ for all $v\in V(\Gamma)$. If in addition $\varphi(e)\neq0$ for every edge $e\in E(\Gamma)$, then $(\DD, \varphi)$ is called a \emph{nowhere-zero} $A$-flow. Use $\Z$ to denote the additive group of all integers and $\Z_{k}$ the additive group of integers module $k$ for every positive integer $k$. We call a $\Z$-flow an integer flow. An integer flow $(\DD, \varphi)$ of $\Gamma$ is called a $k$-flow if $|\varphi(e)|<k$ for all $e\in E(\Gamma)$. It is well known that a graph admits a nowhere-zero $k$-flow
if and only if it admits a nowhere-zero $\Z_{k}$-flow(see \cite[Theorem 1.3.3]{Z1997}), and if $(\DD, \varphi)$ is a nowhere-zero $k$-flow of $\Gamma$ then for any orientation $\DD'$ of $\Gamma$ there exists an integer value map $\varphi'$ on $E(\Gamma)$ such that $(\DD', \varphi')$ is a nowhere-zero $k$-flow of $\Gamma$ (see \cite[Theorem 1.2.8]{Z1997}). In this sense,  we call a nowhere-zero $\Z_{k}$-flow a nowhere-zero $k$-flow and we can choose any orientation when determining whether a graph admits a nowhere-zero $k$-flow.

In \cite{T1954}, Tutte initiated the study of nowhere-zero flows by proving that
a planar graph admits a nowhere-zero $k$-flow if and only if it is face $k$-colorable. Tutte made three beautiful conjectures, namely the $5$-flow, $4$-flow, and $3$-flow conjectures
(see \cite[Conjecture 1.1.5, 1.1.6 and 1.1.8]{Z1997}). These conjectures have motivated a great deal of research on this subject, but despite this all three remain unsolved. The best approach to the $5$-flow conjecture is due to Seymour \cite{S1981} who proved that every 2-edge-connected graph has a nowhere-zero $6$-flow.
In 1979, Jaeger \cite{J1979} proved that every $4$-edge-connected graph admits a nowhere-zero $4$-flow, and he further conjectured that there is a positive integer $k$ such that every $k$-edge-connected graph admits a nowhere-zero $3$-flow. Jaeger's conjecture was confirmed by Thomassen \cite{Th2012} who proved that the statement is true when $k=8$. This breakthrough was further improved by Lov\'asz, Thomassen, Wu and Zhang \cite{LTWZ2013} who proved that every $6$-edge-connected graph admits a nowhere-zero $3$-flow. For a comprehensive general treatment of the subject, we recommend the
monograph \cite{Z1997} by Zhang.

In \cite{IS2003}, Imrich and \v Skrekovski proved that the Cartesian product of any two nontrivial connected graphs has a nowhere-zero $4$-flow, and that it has a nowhere-zero $3$-flow provided that both factors are bipartite. Shu and Zhang \cite{SZ2005} later improved their result by showing that a nontrivial Cartesian product graph $\Gamma\square \Sigma$ has a nowhere-zero $3$-flow except when $\Gamma$ has a bridge and $\Sigma$ is an odd-circuit tree. Rollov\'a and \v Skoviera \cite{RS2012} generalised the result of Imrich and \v Skrekovski to Cartesian graph bundles (see Definition \ref{CB}) by proving that every Cartesian bundle of two graphs without isolated vertices has a nowhere-zero $4$-flow.

Recall that the \emph{Cartesian product} $\Sigma\square\Sigma'$ of two graphs $\Sigma$ and $\Sigma'$ is the graph defined by the rule:
\begin{itemize}
  \item its vertex-set is $V(\Sigma)\times V(\Sigma')$;
  \item its edge-set is $V(\Sigma)\times E(\Sigma')\cup E(\Sigma)\times V(\Sigma')$;
  \item for every edge $(u,e')\in V(\Sigma)\times E(\Sigma')$, the two ends of $(u,e')$ are $(u,u')$ and $(u,v')$ where $u'$ and $v'$ are the two ends of $e'$;
  \item for every edge $(e,u')\in E(\Sigma)\times V(\Sigma')$, the two ends of $(e,u')$ are $(u,u')$ and $(v,u')$ where $u$ and $v$ are the two ends of $e$.
\end{itemize}
It is obvious that every edge of a Cartesian product of two nontrivial connected graphs is contained in a cycle of length $4$. Inspired by the result of Imrich and \v Skrekovski \cite{IS2003}, we planed to verify Tutte's $4$-flow conjecture for all graphs of which every edge is contained in a cycle of length $4$. However, we found this problem being trivial if a lemma of Catlin \cite[Lemma 3.8.11]{Z1997} is applied. This Catlin's lemma is stated as follows: A graph admits a nowhere-zero $4$-flow if it is a union of a cycle of length at most $4$ and a subgraph admitting a nowhere-zero $4$-flow. Motivated by the Catlin's lemma, we study the integer flows of graphs which is a union of two subgraphs with a few number of common edges. From analysing the modular properties of all $A$-flows of a graph with given orientation, we study the nowhere-zero $A$-flows for the case that $A$ is an abelian group of prime power order. In particular, we study the nowhere-zero $4$-flows by giving a generalization of the Catlin's lemma.
The main results of this paper is summarized  as follows.

Firstly we analyse the structure of the set consisting of all $A$-flows of a graph with given orientation.
Let $\Gamma$ be a graph, $\DD$ a given orientation of  $\Gamma$, $A$ an abelian group, and $F$ a set consisting of all functions from $E(\Gamma)$ to $A$ such that $(\DD,\varphi)$ is an $A$-flow of $\Gamma$ for every $\varphi\in F$. Define a binary operation `$+$' by the rule:
$(\varphi_{1}+\varphi_{2})(a)=\varphi_{1}(a)+\varphi_{2}(a),~ \forall~\varphi_{1},\varphi_{2}\in F, ~a\in A$. It is obvious that $(F,+)$ is an additive group with zero function as its zero element. We will further show that $F$ forms a left $R$-module for any unital subring $R$ of the endomorphism ring $\End(A)$ of $A$ (Theorem \ref{module}).  In particular, if $A$ is the additive group of the finite field $\mathbb{F}_{p^{n}}$, then $F$ forms a linear space on the field $\mathbb{F}_{p^{n}}$.

Then, by using the theory of flow module, we study the nowhere-zero $p^n$-flow of graphs. Let $\Gamma$ be a union of two subgraphs $\Gamma_{1}$ and $\Gamma_{2}$, and $p^n$ a prime power. We prove that $\Gamma$ admits a nowhere-zero $p^n$-flow if $\Gamma_{1}$ and $\Gamma_{2}$ have at most $p^n-2$ common edges and both have nowhere-zero $p^n$-flows (see Theorem \ref{common} for a more general result).

Finally, we prove that $\Gamma:=\Gamma_{1}\cup\Gamma_{2}$ admits a nowhere-zero $4$-flow if $\Gamma_{1}$ and $\Gamma_{2}$ both have nowhere-zero $4$-flows and their common edges induce a connected subgraph of $\Gamma$ of size (number of edges) at most $3$ (Theorem \ref{union}). This result, together with Theorem \ref{common}, leads directly to the Catlin's lemma. By using Theorem \ref{union}, we prove that a graph of which every edge is contained in a cycle of length at most $4$ admits a nowhere-zero 4-flow (Theorem \ref{4f}). Note that our results can be used to prove the main result in \cite{RS2012} that every Cartesian bundle of two graphs without isolated vertices has a nowhere-zero $4$-flow (see the proof of Theorem \ref{bundle}).

\section{Preliminaries}
In this section, we fix some notations and introduce some concepts and lemmas for later use.

Let $S$ and $T$ be two sets. We use $S-T$ to denote the difference of $S$ and $T$, and $|S|$ to denote the cardinality of $S$. Let $\Gamma$ be a graph. For any subgraph $\Sigma$ of $\Gamma$, we use $O_{\Gamma}(\Sigma)$ to denote the set consisting of all vertices in $V(\Sigma)$ which have odd valency in $\Gamma$. Set $\overline{O}_{\Gamma}(\Sigma):=V(\Sigma)-O_{\Gamma}(\Sigma)$, and for simplicity write $O(\Gamma):=O_{\Gamma}(\Gamma)$ and $\overline{O}(\Gamma):=\overline{O}_{\Gamma}(\Gamma)$. If $\overline{O}(\Gamma)=V(\Gamma)$, then $\Gamma$ is called an \emph{even graph}.
If $\Sigma$ is a spanning subgraph of $\Gamma$ and $d_{\Sigma}(v)\equiv d_{\Gamma}(v)\pmod2$ for each vertex $v\in V(\Gamma)$, then $\Sigma$ is called a \emph{parity} subgraph of $\Gamma$. A collection of edge-disjoint subgraphs of $\Gamma$ is called a \emph{decomposition} of $\Gamma$ if every edge of $\Gamma$ belongs to exactly one member of this collection.
A decomposition of $\Gamma$ is called a
\emph{parity subgraph decomposition} if each of its members is a parity subgraph of $\Gamma$, and it is said to be \emph{nontrivial} if it has at least two members. Let $\Sigma_{1}$ and $\Sigma_{2}$ be two subgraphs of $\Gamma$. The \emph{symmetric difference} of $\Sigma_{1}$ and $\Sigma_{2}$, denoted by $\Sigma_{1}\triangle\Sigma_{2}$, is the graph with vertex set $V(\Sigma_{1})\cup V(\Sigma_{2})$ and
edge set $\big(E(\Sigma_{1})\cup E(\Sigma_{2})\big)-\big(E(\Sigma_{1})\cap E(\Sigma_{2})\big)$. It is straightforward to check that the symmetric difference of two even graphs is an even graph.
A path (cycle) is called a $k$-\emph{path}
($k$-\emph{cycle}) if it is of length $k$.  For graph-theoretic terminology and notation not mentioned here, please refer to \cite{BM2008}

The following lemma may be well known. For self-contained, we give its proof.
\begin{lem}
\label{2path}
Let $\Gamma$ be a graph and $\Gamma'$ a graph obtained from $\Gamma$ by removing a $2$-path $u_{1}u_{2}u_{3}$ and adding a new edge jointing $u_{1}$ and $u_{3}$. If $\Gamma'$ admits a nowhere-zero $k$-flow, then so does $\Gamma$.
\end{lem}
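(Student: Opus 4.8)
The plan is to take a given nowhere-zero $k$-flow of $\Gamma'$ and transfer the flow value carried by the new edge onto the two edges of the removed $2$-path, thereby producing a nowhere-zero $k$-flow of $\Gamma$.

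First I would fix notation. Write $e_{1}=u_{1}u_{2}$ and $e_{2}=u_{2}u_{3}$ for the two edges of the removed $2$-path, and let $e$ denote the edge added to join $u_{1}$ and $u_{3}$, so that $E(\Gamma')=\big(E(\Gamma)-\{e_{1},e_{2}\}\big)\cup\{e\}$. Let $(\DD',\varphi')$ be a nowhere-zero $k$-flow of $\Gamma'$. After possibly reversing the orientation of $e$ and replacing $\varphi'(e)$ by its negative---an operation that again yields a nowhere-zero $k$-flow---I may assume $e$ is oriented from $u_{1}$ to $u_{3}$ in $\DD'$; put $a:=\varphi'(e)$, which is nonzero.

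Next I would build an orientation $\DD$ of $\Gamma$ together with a map $\varphi\colon E(\Gamma)\to\Z_{k}$. Every edge other than $e_{1},e_{2}$ is also an edge of $\Gamma'$ other than $e$; for each such edge I keep the orientation given by $\DD'$ and set $\varphi$ equal to $\varphi'$ there. I then orient $e_{1}$ from $u_{1}$ to $u_{2}$ and $e_{2}$ from $u_{2}$ to $u_{3}$, and set $\varphi(e_{1})=\varphi(e_{2})=a$. Finally I would verify the conservation law $\varphi^{+}(v)=\varphi^{-}(v)$ at each vertex $v$. At any vertex outside $\{u_{1},u_{2},u_{3}\}$ nothing has changed. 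At $u_{2}$ the edge $e_{1}$ contributes $a$ to the inflow while $e_{2}$ contributes $a$ to the outflow, a net contribution of zero, so conservation there is unaffected. At $u_{1}$ the outgoing edge $e_{1}$ with value $a$ plays exactly the role previously played by the outgoing edge $e$ with value $a$ in $\Gamma'$, and symmetrically the incoming edge $e_{2}$ at $u_{3}$ replaces the incoming $e$; hence conservation continues to hold at $u_{1}$ and $u_{3}$. Since $a\neq0$ and every other value is inherited from the nowhere-zero $\varphi'$, the resulting flow is nowhere zero.

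The argument is a purely local replacement, so I do not anticipate any genuine obstacle. The one point deserving mild care is that $u_{2}$ need not have valency $2$ in $\Gamma$, so it may retain further incident edges in $\Gamma'$; the verification above nonetheless goes through, precisely because the pair $e_{1},e_{2}$ makes a zero net contribution at $u_{2}$. It is also worth noting that $u_{1}u_{3}$ might already be an edge of $\Gamma$, so that $e$ becomes a parallel edge of $\Gamma'$, but this causes no difficulty since multiple edges are permitted throughout.
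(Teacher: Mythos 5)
Your proposal is correct and follows essentially the same route as the paper's own proof: orient the two path edges $u_{1}\to u_{2}\to u_{3}$ consistently with the removed edge $u_{1}\to u_{3}$, assign both the value $\varphi'(e)$, and keep everything else unchanged. The extra care you take at $u_{2}$ (the cancelling in/out contributions) and the remarks about parallel edges are sound but the argument is the same local replacement the authors use.
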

\begin{proof}
Use $e_i$ to denote the edge in the removing $2$-path with ends $u_{i}$ and $u_{i+1}$ where $i=1,2$, and $e_0$ to denote the new added edge.
Let $(\DD',\varphi')$ be a nowhere-zero $k$-flow of $\Gamma'$. Without loss of generality, assume that $e_0\in \DD'^{+}(u_{1})$. Define an orientation $\DD$ on $\Gamma$ by the rule: `$e_1\in \DD^{+}(u_{1})$, $e_2\in \DD^{+}(u_{2})$, and the restriction of $\DD$ to $\Gamma-\{e_1,e_2\}$ is coincident with the restriction of $\DD'$ to $\Gamma-e_0$'. Then it is straightforward to check that $(\DD,\varphi)$ is a nowhere-zero $k$-flow of $\Gamma$ where $\varphi$ is the function on $E(\Gamma)$ satisfying $\varphi(e_1)=\varphi(e_2)=\varphi'(e_0)$ and $\varphi(e)=\varphi'(e)$ for every $e\in E(\Gamma)-\{e_1,e_2\}$.
\end{proof}
The following three lemmas can be found in \cite{Z1997}.
\begin{lem}
{\rm\cite[Theorem 3.2.4]{Z1997}}
\label{parity}
A graph admits a nowhere-zero $4$-flow if and only if it has a nontrivial parity subgraph decomposition.
\end{lem}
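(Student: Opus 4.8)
The plan is to pass to the Klein four-group $\mathbb{Z}_{2}\times\mathbb{Z}_{2}$ and read a parity subgraph decomposition directly off the edge-colouring induced by a nowhere-zero $(\mathbb{Z}_{2}\times\mathbb{Z}_{2})$-flow. First I would record the tool that makes $4$ special: by Tutte's group-independence of nowhere-zero flows (the flow polynomial depends only on $|A|$, see \cite{Z1997}), a graph admits a nowhere-zero $4$-flow if and only if it admits a nowhere-zero $(\mathbb{Z}_{2}\times\mathbb{Z}_{2})$-flow. The point of choosing $\mathbb{Z}_{2}\times\mathbb{Z}_{2}$ rather than $\mathbb{Z}_{4}$ is twofold: every element is its own inverse, so the flow condition at a vertex $v$ reads simply $\sum_{e\ni v}\varphi(e)=0$ irrespective of the orientation; and its three nonzero elements $a,b,c$ satisfy $a+b+c=0$ with any two summing to the third.

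For the forward direction, let $\varphi$ be a nowhere-zero $(\mathbb{Z}_{2}\times\mathbb{Z}_{2})$-flow relative to a fixed orientation $\DD$, and partition $E(\Gamma)$ into the three colour classes $E_{x}:=\varphi^{-1}(x)$ for $x\in\{a,b,c\}$; these cover $E(\Gamma)$ because $\varphi$ is nowhere-zero. Writing $d_{x}(v)$ for the number of edges of colour $x$ incident with $v$, the flow condition $d_{a}(v)a+d_{b}(v)b+d_{c}(v)c=0$ together with $c=a+b$ and the independence of $a,b$ forces $d_{a}(v)\equiv d_{b}(v)\equiv d_{c}(v)\pmod 2$, whence each is congruent to $d_{\Gamma}(v)=d_{a}(v)+d_{b}(v)+d_{c}(v)\equiv 3d_{c}(v)\pmod 2$. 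Thus the three spanning subgraphs with edge sets $E_{a},E_{b},E_{c}$ are parity subgraphs of $\Gamma$; being edge-disjoint and covering $E(\Gamma)$, they constitute a parity subgraph decomposition, which is nontrivial as it has three members.

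For the converse, I would build the flow by hand from a decomposition $\{P_{1},\dots,P_{m}\}$ with $m\ge 2$. Choose nonzero values $f(1),\dots,f(m)\in\mathbb{Z}_{2}\times\mathbb{Z}_{2}$ with $\sum_{i}f(i)=0$; this is always possible for $m\ge2$ (take all $f(i)=a$ when $m$ is even, and replace three of them by $a,b,c$ when $m$ is odd). Setting $\varphi(e):=f(i)$ for $e\in E(P_{i})$ yields a map that is nowhere-zero since each $f(i)\ne0$, and at every vertex $v$ the flow sum collapses as $\sum_{e\ni v}\varphi(e)=\sum_{i}d_{P_{i}}(v)f(i)\equiv\sum_{i}d_{\Gamma}(v)f(i)=d_{\Gamma}(v)\sum_{i}f(i)=0$, using that each $P_{i}$ is a parity subgraph so that $d_{P_{i}}(v)\equiv d_{\Gamma}(v)\pmod 2$. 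Hence $\varphi$ is a nowhere-zero $(\mathbb{Z}_{2}\times\mathbb{Z}_{2})$-flow and $\Gamma$ admits a nowhere-zero $4$-flow.

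The step I expect to require the most care is the bookkeeping in the forward direction: verifying that each colour class really is a \emph{parity} (rather than merely even) subgraph, and checking the degenerate cases in which a colour class has empty edge set, so that one must confirm the listed three spanning subgraphs still satisfy the definition of a nontrivial decomposition. The conceptual heart, however, is the single identity $d_{P_{i}}(v)\equiv d_{\Gamma}(v)\pmod 2$, which lets the characteristic-$2$ flow sum reduce to a multiple of $\sum_{i}f(i)$; once this is in place both directions are short.
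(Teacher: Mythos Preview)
The paper does not supply its own proof of this lemma; it is quoted as \cite[Theorem 3.2.4]{Z1997} and used as a black box, so there is no in-paper argument to compare against. Your proof via the Klein four-group is correct and is essentially the standard argument behind the cited result: the forward-direction parity computation $d_{a}(v)\equiv d_{b}(v)\equiv d_{c}(v)\equiv d_{\Gamma}(v)\pmod 2$ is exactly right, and your converse construction with $\sum_{i}f(i)=0$ cleanly handles all $m\ge 2$. The degenerate case you flag (an empty colour class) causes no trouble, since an empty $E_{x}$ forces $d_{\Gamma}(v)\equiv 0$ for all $v$, so the edgeless spanning subgraph is still a parity subgraph; this is precisely the situation treated in the paper's Remark following the lemma.
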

\begin{rem}
\label{3parity}
It is easy to prove that a graph $\Gamma$ has a parity subgraph decomposition with three members if it has a nontrivial parity subgraph decomposition. Actually, if $\Gamma$ is even, then $\Gamma$ has a nontrivial  parity subgraph decomposition $\Gamma=\Gamma\cup\emptyset_{\Gamma}\cup\emptyset_{\Gamma}$ where $\emptyset_{\Gamma}$ is the empty graph on $V(\Gamma)$. Now we assume $\Gamma$ is not even and has a nontrivial parity subgraph decomposition $\Gamma=\Gamma_{1}\cup\Gamma_{2}\cup\cdots\cup\Gamma_{s}$. Then $s$ is an odd integer at least $3$. Set $\Gamma'_{3}=\Gamma_{3}\cup\cdots\cup\Gamma_{s}$. Then $\Gamma$ has a nontrivial parity subgraph decomposition $\Gamma=\Gamma_{1}\cup\Gamma_{2}\cup\Gamma'_{3}$.
\end{rem}
\begin{lem}
{\rm\cite[Theorem 3.3.3]{Z1997}}
\label{evenly}
A graph $\Gamma$ admits a nowhere-zero $4$-flow if and only if it has an even spanning subgraph $\Sigma$ such that $|O_{\Gamma}(\Theta)|$ is an even integer for each connected component $\Theta$ of $\Sigma$.
\end{lem}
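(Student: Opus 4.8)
The plan is to route through the well-known reformulation that $\Gamma$ admits a nowhere-zero $4$-flow if and only if $E(\Gamma)$ can be covered by two even subgraphs, and then to convert that covering condition into the stated even-spanning-subgraph condition by a parity (T-join) argument. First I would establish the bridge to the two-even-cover statement using what is already available. By Lemma \ref{parity} together with Remark \ref{3parity}, $\Gamma$ admits a nowhere-zero $4$-flow if and only if it has a parity subgraph decomposition $\Gamma=\Gamma_{1}\cup\Gamma_{2}\cup\Gamma_{3}$ into exactly three members. Since each $\Gamma_{i}$ is a parity subgraph, $\Gamma-E(\Gamma_{i})$ is even, so $C_{1}:=\Gamma_{1}\cup\Gamma_{2}$ and $C_{2}:=\Gamma_{1}\cup\Gamma_{3}$ are even spanning subgraphs whose edge sets cover $E(\Gamma)$. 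Conversely, from any two even subgraphs $C_{1},C_{2}$ with $E(C_{1})\cup E(C_{2})=E(\Gamma)$, the three edge sets $E(C_{1})\cap E(C_{2})$, $E(C_{1})-E(C_{2})$, $E(C_{2})-E(C_{1})$ partition $E(\Gamma)$; each pairwise union equals $C_{1}$, $C_{2}$, or $C_{1}\triangle C_{2}$, hence is even, and a short mod-$2$ degree count then forces each member to agree with $d_{\Gamma}$ modulo $2$, i.e.\ to be a parity subgraph. Thus I may work with the clean equivalent: $\Gamma$ has a nowhere-zero $4$-flow iff $E(\Gamma)$ is a union of two even subgraphs.

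The heart of the argument is then a single parity computation. Fix an even spanning subgraph $\Sigma$ and search for an even subgraph $C_{2}$ with $E(\Sigma)\cup E(C_{2})=E(\Gamma)$. Every edge outside $\Sigma$ is forced into $C_{2}$, so $C_{2}=(E(\Gamma)-E(\Sigma))\cup X$ for some $X\subseteq E(\Sigma)$. Because $\Sigma$ is even we have $d_{\Gamma-E(\Sigma)}(v)\equiv d_{\Gamma}(v)\pmod 2$, so $C_{2}$ is even if and only if $d_{X}(v)\equiv d_{\Gamma}(v)\pmod 2$ for every vertex $v$; that is, $X$ must be a subgraph of $\Sigma$ whose set of odd-degree vertices is exactly $O(\Gamma)$. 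The whole theorem therefore collapses to one question: for which $\Sigma$ does such an $X\subseteq E(\Sigma)$ exist?

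The answer I would invoke is the standard T-join existence fact, which is the main technical point to pin down cleanly: in a connected graph $H$ and a vertex subset $T$, there exists an edge set with odd-degree vertex set exactly $T$ if and only if $|T|$ is even. Necessity is immediate since every graph has an even number of odd-degree vertices; sufficiency follows by pairing up the vertices of $T$ and taking the symmetric difference of paths joining the pairs, using that degree parity is additive under symmetric difference and that each path contributes odd degree only at its two ends. Applying this component by component to $\Sigma$ (its components partition both its vertices and its edges), a suitable $X$ exists iff in each component $\Theta$ the prescribed odd-degree set $O(\Gamma)\cap V(\Theta)=O_{\Gamma}(\Theta)$ has even size. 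Assembling: in the backward direction I build $X$ from per-component T-joins and take $C_{2}=(E(\Gamma)-E(\Sigma))\cup X$, while in the forward direction I take $\Sigma:=C_{1}$ and read off $X:=E(C_{1})\cap E(C_{2})$, whose restriction to each component of $\Sigma$ is a genuine subgraph and hence has an even odd-degree set. Both directions close at once.

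I expect the main obstacle to be two-fold: isolating and proving the T-join existence statement in a self-contained way (the path-pairing/symmetric-difference construction is where the real content sits), and bookkeeping the fact that $\Sigma$ is \emph{spanning}. The spanning hypothesis guarantees $V(\Sigma)=V(\Gamma)$, so that $O(\Gamma)\cap V(\Theta)=O_{\Gamma}(\Theta)$ exactly, and it makes isolated vertices of $\Sigma$ into singleton components that trivially satisfy the parity condition; I would state the component decomposition carefully so these degenerate components cause no trouble.
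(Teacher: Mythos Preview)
The paper does not supply its own proof of Lemma~\ref{evenly}; it simply cites \cite[Theorem~3.3.3]{Z1997} and uses the result as a black box. So there is no in-paper argument to compare against.

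That said, your proposal is a correct, self-contained proof. The reduction to the ``two even subgraphs cover $E(\Gamma)$'' criterion is carried out cleanly from Lemma~\ref{parity} and Remark~\ref{3parity} (the paper itself quotes this criterion elsewhere as \cite[Theorem~3.1.2]{Z1997}), and your subsequent $T$-join analysis---writing $C_{2}=(E(\Gamma)-E(\Sigma))\cup X$ and reducing evenness of $C_{2}$ to $d_{X}(v)\equiv d_{\Gamma}(v)\pmod 2$---is sound, with the componentwise existence of $X$ handled by the standard path-pairing construction. The one delicate spot you flag, isolated-vertex components of $\Sigma$, also closes: in the backward direction the hypothesis forces such a vertex to lie in $\overline{O}(\Gamma)$, and in the forward direction (taking $\Sigma=C_{1}$) any vertex isolated in $C_{1}$ has all its $\Gamma$-edges in $C_{2}$, hence even $\Gamma$-degree since $C_{2}$ is even. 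Nothing is missing.
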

\begin{lem}
{\rm\cite[Corollary 2.7.2]{Z1997}}
\label{equal}
Let $\Gamma$ be a graph. If $A$ and $B$ are two abelian groups of equal order, then $\Gamma$ has a nowhere-zero $A$-flow if and only if it has a nowhere-zero $B$-flow.
\end{lem}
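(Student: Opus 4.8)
The plan is to prove the sharper quantitative fact that the number of nowhere-zero $A$-flows of $\Gamma$ depends only on the order $|A|$, and not on the isomorphism type of $A$; the stated equivalence then follows at once, because $\Gamma$ admits a nowhere-zero $A$-flow exactly when this number is positive. First I would fix an arbitrary orientation $\DD$ of $\Gamma$, recalling that the count of nowhere-zero $A$-flows compatible with $\DD$ is independent of the orientation chosen. For each $T\subseteq E(\Gamma)$, let $\Sigma$ be the spanning subgraph with edge set $T$, and write $f_A(T)$ for the number of $A$-flows $(\DD,\varphi)$ of $\Gamma$ with $\varphi(e)=0$ for all $e\notin T$; since the flow condition at each vertex involves only edges carrying nonzero values, $f_A(T)$ is exactly the number of $A$-flows of $\Sigma$.

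The first step is to show that $f_A(T)=|A|^{\beta(T)}$, where $\beta(T):=|T|-|V(\Gamma)|+c(T)$ and $c(T)$ is the number of connected components of $\Sigma$. An $A$-flow supported on $T$ is precisely an element $\varphi\in A^{T}$ in the kernel of the boundary map given by the oriented incidence matrix $M$ of $\Sigma$, a matrix over $\Z$ with entries in $\{0,\pm1\}$. Putting $M$ in Smith normal form $M=U\Lambda V$ with $U,V$ unimodular over $\Z$, I would use that $\varphi\mapsto V\varphi$ is an automorphism of $A^{T}$ and that $U$ acts invertibly on $A^{V(\Gamma)}$, so that the flows correspond bijectively to the solutions $\psi$ of $\Lambda\psi=0$. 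Because the incidence matrix of a graph is totally unimodular, every invariant factor of $M$ equals $1$; hence $\Lambda\psi=0$ forces the first $\mathrm{rank}(M)=|V(\Gamma)|-c(T)$ coordinates of $\psi$ to vanish and leaves the remaining $\beta(T)$ coordinates free, giving $f_A(T)=|A|^{\beta(T)}$, a quantity depending only on $|A|$ and on $T$.

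The second step is an inclusion--exclusion over the support. Writing $N_A(T)$ for the number of $A$-flows whose support is exactly $T$, one has $f_A(T)=\sum_{S\subseteq T}N_A(S)$, and M\"obius inversion on the Boolean lattice yields
\[
N_A(E(\Gamma))=\sum_{T\subseteq E(\Gamma)}(-1)^{|E(\Gamma)|-|T|}\,f_A(T)=\sum_{T\subseteq E(\Gamma)}(-1)^{|E(\Gamma)|-|T|}\,|A|^{\beta(T)}.
\]
The left-hand side is the number of nowhere-zero $A$-flows of $\Gamma$, while the right-hand side is a fixed integer polynomial in $|A|$ (the flow polynomial of $\Gamma$) whose coefficients are determined by $\Gamma$ alone. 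Consequently, if $A$ and $B$ are abelian groups with $|A|=|B|$, then the two counts coincide; in particular one is nonzero exactly when the other is, which gives the desired equivalence.

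The only genuinely delicate point is the first step: one must know that the oriented incidence matrix of a graph has all invariant factors equal to $1$ over $\Z$ (equivalently, that the integral cycle space is a direct summand of $\Z^{T}$), so that counting kernel elements over an \emph{arbitrary} abelian group returns the group-independent value $|A|^{\beta(T)}$ rather than something sensitive to the torsion structure of $A$. This is where total unimodularity is essential, since the greatest common divisor of the $k\times k$ minors of $M$ is then $1$ for every $k$ up to $\mathrm{rank}(M)$; everything else in the argument is formal.
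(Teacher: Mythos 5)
The paper does not actually prove this lemma: it is imported verbatim from Zhang's monograph (Corollary 2.7.2 there), so there is no in-paper argument to compare against. Your proof is correct, and it is in substance the classical Tutte argument that underlies the cited source: the number of nowhere-zero $A$-flows of $\Gamma$ equals a polynomial in $|A|$ determined by $\Gamma$ alone, so positivity of that count depends only on the order of the group. The cited proof reaches the flow polynomial by deletion--contraction, whereas you reach it by M\"obius inversion over supports; both routes rest on the same key fact, namely that a graph with $m$ edges, $n$ vertices and $c$ components has exactly $|A|^{m-n+c}$ $A$-flows. Your justification of that fact is sound and correctly isolates the only delicate point: since the oriented incidence matrix is totally unimodular, all of its invariant factors equal $1$, so after passing to Smith normal form the kernel over an \emph{arbitrary} abelian group $A$ has exactly $|A|^{m-\mathrm{rank}}$ elements, independent of the torsion structure of $A$ (the unimodular change-of-basis matrices do act as automorphisms of $A^{T}$ and $A^{V(\Gamma)}$, as you use). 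The remaining steps --- extension by zero identifying flows supported in $T$ with flows of the spanning subgraph $(V(\Gamma),T)$, and inclusion--exclusion on the Boolean lattice of supports --- are routine, and the loop-free hypothesis of the paper means no zero columns arise in the incidence matrix. In fact you prove the stronger quantitative statement that the two groups admit equally many nowhere-zero flows, from which the lemma is immediate.
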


\section{Flow Modules of Graphs}
Let $R$ be a ring with $1$ as its multiplicative identity. We call an abelian group $M$ a \emph{left $R$-module} if there is an operation:
$R\times M\longrightarrow M, ~(r,x)\mapsto rx,~\forall r\in R~\mbox{and}~x\in M$ satisfying the following axioms:
\begin{equation*}
r(x+y)=rx+ry,~(r+s)x=rx+sx,~(rs)x=r(sx)~\mbox{and}~1x=x
\end{equation*}
for all $r,s\in R$ and $x, y\in M$.
The operation of the ring $R$ on $M$ is called \emph{scalar multiplication}.
A right $R$-module $M$ is defined similarly,  except that the  scalar multiplication takes the form $M\times R\longrightarrow M,~(x,r)\mapsto xr$, and the above axioms are written with scalars $r$ and $s$ on the right of $x$ and $y$. If $R$ is commutative, then left $R$-modules are same as right $R$-modules and are simply called $R$-modules.
\begin{exam}
Let $M$ be an abelian group and $R$ be a unital subring of $\End(M)$. Then it is straightforward to check that $M$ is a left $R$-module with the scalar multiplication defined by $rx=r(x)$ for all $r\in R$ and $x\in M$.
\end{exam}
\begin{theorem}\label{module}
Let $\Gamma$ be a graph, $\DD$ an orientation of $\Gamma$, $A$ an abelian group, and $F$ a set consisting of all functions from $\DD$ to $A$ such that $(\DD,\varphi)$ is an $A$-flow of $\Gamma$ for every $\varphi\in F$. Then $F$ forms a left $R$-module for any unital subring $R$ of $\End(A)$.
\end{theorem}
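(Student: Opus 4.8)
The plan is to equip $F$ with the scalar multiplication inherited from the action of $\End(A)$ on $A$ described in the Example: for $r\in R$ and $\varphi\in F$, define $r\varphi$ to be the map sending each edge $e\in E(\Gamma)$ to $r(\varphi(e))\in A$. Since $(F,+)$ is already known to be an abelian group with the zero function as its identity, it remains only to check that this operation is well-defined (i.e.\ that $r\varphi$ again lies in $F$) and that the four module axioms hold.

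The one substantive point is closure, and it rests entirely on the additivity of $r$. For any vertex $v$, using that $r$ is an endomorphism of $A$, I would compute
\[
(r\varphi)^{+}(v)=\sum_{e\in \DD^{+}(v)} r(\varphi(e))
= r\Big(\sum_{e\in \DD^{+}(v)}\varphi(e)\Big)
= r\big(\varphi^{+}(v)\big),
\]
and likewise $(r\varphi)^{-}(v)=r(\varphi^{-}(v))$. Since $\varphi\in F$ gives $\varphi^{+}(v)=\varphi^{-}(v)$, applying $r$ to both sides yields $(r\varphi)^{+}(v)=(r\varphi)^{-}(v)$ for every $v$, so $(\DD,r\varphi)$ is an $A$-flow, i.e.\ $r\varphi\in F$.

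For the axioms, I would verify each as a pointwise identity on an arbitrary edge $e$, thereby reducing everything to the fact (from the Example) that $A$ is itself a left $\End(A)$-module. Explicitly, $r(\varphi_{1}+\varphi_{2})$ and $r\varphi_{1}+r\varphi_{2}$ agree on $e$ by the additivity of $r$; $(r+s)\varphi$ and $r\varphi+s\varphi$ agree by the definition of addition in $\End(A)$; $(rs)\varphi$ and $r(s\varphi)$ agree because multiplication in $\End(A)$ is composition of endomorphisms; and $1\varphi=\varphi$ since $1$ is the identity endomorphism. Restricting the scalars from $\End(A)$ to the unital subring $R$ changes nothing in these verifications, so they hold for all $r,s\in R$.

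I do not expect any genuine obstacle here. The only place where the flow condition is actually used is the closure step, and there it is immediate once one observes that the defining equalities $\varphi^{+}(v)=\varphi^{-}(v)$ are preserved by any additive map applied coordinatewise. All remaining work is the routine bookkeeping of transporting the $\End(A)$-module structure on $A$ to the pointwise structure on maps $E(\Gamma)\to A$ and observing that this structure restricts to the subset $F$ of flows.
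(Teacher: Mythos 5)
Your proposal is correct and takes essentially the same route as the paper's own proof: equip $F$ with the pointwise scalar multiplication $(r\varphi)(e)=r(\varphi(e))$ and verify the four module axioms edge by edge. In fact you are slightly more careful than the paper, whose proof states the map $R\times F\to F$ without ever checking closure (that $r\varphi$ is again an $A$-flow); your computation $(r\varphi)^{+}(v)=r(\varphi^{+}(v))=r(\varphi^{-}(v))=(r\varphi)^{-}(v)$ using the additivity of $r$ supplies exactly that omitted step.
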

\begin{proof}
Define a binary operation `$+$' by the rule:
$(\varphi_{1}+\varphi_{2})(a)=\varphi_{1}(a)+\varphi_{2}(a),~ \forall~\varphi_{1},\varphi_{2}\in F, ~a\in A$.
It is straightforward to check that $(F,+)$ is an additive group with zero function as its zero element. Consider the operation
$R\times F\longrightarrow F, ~(r,\varphi)\mapsto r\varphi,~\forall r\in R~\mbox{and}~\varphi\in F$  where $(r\varphi)(e)=r\varphi(e)$ for all $e\in E(\Gamma)$.
Take $r_{1},r_{2}\in R$ and $\varphi_{1},\varphi_{2}\in F$. Then
\begin{equation*}
\big(r_{1}(\varphi_{1}+\varphi_{2})\big)(e)=r_{1}(\varphi_{1}+\varphi_{2})(e)
=r_{1}\big (\varphi_{1}(e)+\varphi_{2}(e)\big)
=(r_{1} \varphi_{1}+r_{1}\varphi_{2})(e),
\end{equation*}
\begin{equation*}
((r_{1} +r_{2})\varphi_{1})(e)=(r_{1} +r_{2})\varphi_{1}(e)=r_{1} \varphi_{1}(e)+r_{2}\varphi_{1}(e)=(r_{1} \varphi_{1}+r_{2}\varphi_{1})(e),
\end{equation*}
\begin{equation*}
((r_{1}r_{2})\varphi_{1})(e)=(r_{1}r_{2})(\varphi_{1}(e))
=r_{1}\big(r_{2}\varphi_{1}(e)\big)=r_{1}(r_{2}\varphi_{1})(e)
=(r_{1}(r_{2}\varphi_{1}))(e)
\end{equation*}
and
\begin{equation*}
(1\varphi_{1})(e)=1(\varphi_{1}(e))=\varphi_{1}(e).
\end{equation*}
It follows that $r_{1}(\varphi_{1}+\varphi_{2})=r_{1}\varphi_{1}+r_{1} \varphi_{2}$, $(r_{1}+r_{2}) \varphi_{1}=r_{1} \varphi_{1}+r_{2} \varphi_{1}$,
$(r_{1}r_{2})\varphi_{1}=r_{1} (r_{2}\varphi_{1})$ and
$1 \varphi_{1}=\varphi_{1}$. Therefore $F$ is left $R$-module.
\end{proof}
\begin{coro}
Let $\Gamma$ be a graph, $A$ an abelian group and $m$ a positive integer. If $(\DD,\varphi_{i})$ is an $A$-flow of $\Gamma$ and $\sigma_{i}$ is an endomorphism of $A$ for every $1\leq i\leq m$, then $(\DD,\varphi)$ is an $A$-flow of $\Gamma$ where $\varphi=\sigma_{1}\varphi_{1}+\ldots+\sigma_{m}\varphi_{m}$.
\end{coro}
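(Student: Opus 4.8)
The plan is to deduce this corollary directly from Theorem \ref{module} by taking the endomorphism ring itself as the ring of scalars. First I would set $R=\End(A)$, which is a unital ring with the identity endomorphism as its multiplicative identity and is trivially a unital subring of itself. By Theorem \ref{module}, the set $F$ of all functions $\psi$ on $E(\Gamma)$ for which $(\DD,\psi)$ is an $A$-flow forms a left $\End(A)$-module under pointwise addition and the scalar action $(\sigma\psi)(e)=\sigma(\psi(e))$ for $\sigma\in\End(A)$.

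Next I would observe that the hypotheses place every ingredient inside this module structure. Since $(\DD,\varphi_{i})$ is an $A$-flow we have $\varphi_{i}\in F$ for each $i$, and since each $\sigma_{i}$ is an endomorphism of $A$ we have $\sigma_{i}\in\End(A)=R$. Closure of a module under scalar multiplication then gives $\sigma_{i}\varphi_{i}\in F$ for each $i$, and closure of the underlying additive group under finite sums gives $\varphi=\sigma_{1}\varphi_{1}+\cdots+\sigma_{m}\varphi_{m}\in F$. By the very definition of $F$, this is exactly the assertion that $(\DD,\varphi)$ is an $A$-flow, which completes the argument.

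There is essentially no obstacle here, as the statement is a formal consequence of the module axioms already established; the only point deserving a word of care is that $\End(A)$ itself qualifies as an admissible scalar ring in Theorem \ref{module}, which holds because it is unital and is a subring of itself. If one preferred a self-contained computation avoiding the module language, the same conclusion would follow by writing $\varphi^{+}(v)=\sum_{i}\sigma_{i}\big(\varphi_{i}^{+}(v)\big)$ and $\varphi^{-}(v)=\sum_{i}\sigma_{i}\big(\varphi_{i}^{-}(v)\big)$ for each $v\in V(\Gamma)$ — using that each $\sigma_{i}$ is additive and hence commutes with the sums defining $\varphi^{\pm}(v)$ — and then applying the flow condition $\varphi_{i}^{+}(v)=\varphi_{i}^{-}(v)$ termwise to conclude $\varphi^{+}(v)=\varphi^{-}(v)$.
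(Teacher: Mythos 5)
Your proposal is correct and matches the paper's intent: the corollary is stated in the paper without a separate proof precisely because it is the immediate consequence of Theorem \ref{module} applied with $R=\End(A)$, which is exactly the deduction you give. The closure argument (each $\sigma_{i}\varphi_{i}\in F$ by scalar multiplication, then $\varphi\in F$ by additive closure) and your optional direct verification via $\varphi^{\pm}(v)=\sum_{i}\sigma_{i}\bigl(\varphi_{i}^{\pm}(v)\bigr)$ are both sound.
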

\begin{coro}\label{space}
Let $\Gamma$ be a graph, and $A$ the additive group of the finite field $\mathbb{F}_{p^{n}}$. Then all $A$-flows of $\Gamma$ with given orientation $\DD$ form a linear space on the field $\mathbb{F}_{p^{n}}$. In particular, the number of $A$-flows of $\Gamma$ with orientation $\DD$ is a power of $p^n$.
\end{coro}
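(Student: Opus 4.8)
The plan is to realise the field $\mathbb{F}_{p^{n}}$ itself as a unital subring of $\End(A)$ and then quote Theorem \ref{module}. For each scalar $c\in\mathbb{F}_{p^{n}}$, left multiplication $\lambda_{c}\colon A\to A$, $x\mapsto cx$, is an endomorphism of the additive group $A$ by the distributive law $c(x+y)=cx+cy$. The assignment $c\mapsto\lambda_{c}$ is a ring homomorphism from $\mathbb{F}_{p^{n}}$ into $\End(A)$: it respects addition since $(c+c')x=cx+c'x$, it respects multiplication since $(cc')x=c(c'x)$, and it sends $1$ to $\mathrm{id}_{A}=\lambda_{1}$. It is injective because $\lambda_{c}=0$ forces $c=c\cdot 1=0$. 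Hence $R:=\{\lambda_{c}:c\in\mathbb{F}_{p^{n}}\}$ is a unital subring of $\End(A)$ isomorphic to the field $\mathbb{F}_{p^{n}}$.

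Next I would apply Theorem \ref{module} with this $R$ to conclude that $F$ is a left $R$-module. Transporting the $R$-action along the isomorphism $\mathbb{F}_{p^{n}}\cong R$, the scalar multiplication becomes $(c\varphi)(e)=c\,\varphi(e)$ for $c\in\mathbb{F}_{p^{n}}$, $\varphi\in F$ and $e\in E(\Gamma)$, and the four module axioms verified in Theorem \ref{module} are exactly the vector-space axioms over the field $\mathbb{F}_{p^{n}}$. Since $\mathbb{F}_{p^{n}}$ is a field, a left module over it is precisely an $\mathbb{F}_{p^{n}}$-linear space, which gives the first claim.

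For the cardinality statement, observe that $F$ is a subset of the finite set $A^{E(\Gamma)}$, so it is a finite-dimensional $\mathbb{F}_{p^{n}}$-space; say $\dim_{\mathbb{F}_{p^{n}}}F=d$. Choosing a basis yields a linear isomorphism $F\cong\mathbb{F}_{p^{n}}^{\,d}$, whence $|F|=(p^{n})^{d}=p^{nd}$ is a power of $p^{n}$.

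The argument is a direct specialisation of Theorem \ref{module}, so there is no genuine obstacle; the only point requiring a line of care is checking that $R$ is unital, that is, that the identity endomorphism $\mathrm{id}_{A}$ lies in $R$, which holds because $\lambda_{1}=\mathrm{id}_{A}$. Everything else is the standard identification of a module over a field with a vector space together with the counting of a finite-dimensional space over a finite field.
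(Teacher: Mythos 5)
Your proposal is correct and follows essentially the same route as the paper: realise $\mathbb{F}_{p^{n}}$ as a unital subring of $\End(A)$ acting by multiplication, invoke Theorem \ref{module} to get a module (hence a vector space) structure on $F$, and count $|F|=(p^{n})^{d}$ via a basis. You simply spell out in more detail the verification that $c\mapsto\lambda_{c}$ is an injective unital ring homomorphism, which the paper takes for granted.
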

\begin{proof}
Since $A$ is the additive group of the finite field $\mathbb{F}_{p^{n}}$, $\mathbb{F}_{p^{n}}$ can be seen as a unital subring of $\End(A)$ with the action of $\mathbb{F}_{p^{n}}$ on $A$ given by $(r,a)=ra$ for all $r\in\mathbb{F}_{p^{n}}$ and $a\in A$. Let $F$ be a set consisting of all functions from $E(\Gamma)$ to $A$ such that $(\DD,\varphi)$ is an $A$-flow of $\Gamma$ for every $\varphi\in F$. By Theorem $\ref{module}$,  $F$ is a $\mathbb{F}_{p^{n}}$-module. Since $\mathbb{F}_{p^{n}}$ is a field, $F$ is a linear space over $\mathbb{F}_{p^{n}}$. It follows that the cardinality of $F$ is a power of $p^n$.
\end{proof}

\begin{theorem}
\label{common}
Let $\Gamma$ be a graph, and $A$ an abelian group of order a prime power $p^{n}$. Suppose that there are $s$(~$\geq$2~) subgraphs  $\Gamma_{1},\Gamma_{2},...,\Gamma_{s}$ of $\Gamma$ such that $\Gamma=\cup_{i=1}^{s}\Gamma_{i}$. If $\Gamma_{i}$ admits a nowhere-zero $A$-flow for each $i\in\{1,2,...,s\}$, and $\cup_{i=1}^{\ell-1}\Gamma_{i}$ and $\Gamma_{\ell}$ have at most $p^{n}-2$ common edges for each $\ell\in\{2,...,s\}$, then $\Gamma$ admits a nowhere-zero $A$-flow.
\end{theorem}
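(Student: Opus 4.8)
The plan is to reduce to the case of two subgraphs and then induct, working throughout over a field. First I would invoke Lemma~\ref{equal}: since $A$ has order $p^{n}$, a graph admits a nowhere-zero $A$-flow if and only if it admits a nowhere-zero $\mathbb{F}_{p^{n}}$-flow, where $\mathbb{F}_{p^{n}}$ denotes the additive group of the field with $p^{n}$ elements. So I may assume $A=\mathbb{F}_{p^{n}}$ and exploit Corollary~\ref{space}: after fixing one orientation $\DD$ of $\Gamma$, the set of $A$-flows of any subgraph, with the inherited orientation, is an $\mathbb{F}_{p^{n}}$-linear space. I would also record the elementary fact that the existence of a nowhere-zero $A$-flow does not depend on the chosen orientation (reversing an edge and negating its value preserves both the conservation law and nowhere-zero-ness, since negation fixes only $0$), so each $\Gamma_{i}$ has a nowhere-zero $A$-flow with respect to $\DD|_{\Gamma_{i}}$.

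The core is the two-subgraph case: if $\Gamma=\Gamma_{1}\cup\Gamma_{2}$, both $\Gamma_{i}$ have nowhere-zero $A$-flows, and $D:=E(\Gamma_{1})\cap E(\Gamma_{2})$ satisfies $|D|\le p^{n}-2$, then $\Gamma$ has one. The key construction is a gluing that uses \emph{addition}, not agreement, on the common edges. Given flows $\psi_{1}$ on $\Gamma_{1}$ and $\psi_{2}$ on $\Gamma_{2}$, define $\psi$ on $E(\Gamma)$ by $\psi(e)=\psi_{1}(e)$ on $E(\Gamma_{1})-D$, by $\psi(e)=\psi_{2}(e)$ on $E(\Gamma_{2})-D$, and by $\psi(e)=\psi_{1}(e)+\psi_{2}(e)$ on $D$. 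Checking the conservation law at a vertex $v$ splits into three cases. For $v\in V(\Gamma_{1})-V(\Gamma_{2})$ (and symmetrically the other way), $v$ meets no $\Gamma_{2}$-edge and in particular no common edge, so the $\Gamma$-law for $\psi$ at $v$ is just the $\Gamma_{1}$-law for $\psi_{1}$. The only non-trivial case is $v\in V(\Gamma_{1})\cap V(\Gamma_{2})$: adding the conservation identity for $\psi_{1}$ at $v$ in $\Gamma_{1}$ to that for $\psi_{2}$ at $v$ in $\Gamma_{2}$ yields exactly the conservation identity for $\psi$ at $v$ in $\Gamma$, because each common edge at $v$ occurs with the same orientation in both and contributes $\psi_{1}(e)+\psi_{2}(e)=\psi(e)$. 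Thus $\psi$ is always an $A$-flow of $\Gamma$.

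It remains to choose $\psi_{1},\psi_{2}$ so that $\psi$ is nowhere-zero. Fix a nowhere-zero flow $\varphi_{2}$ of $\Gamma_{2}$ and a nowhere-zero flow $\varphi_{1}$ of $\Gamma_{1}$, and set $\psi_{2}=\varphi_{2}$ and $\psi_{1}=\lambda\varphi_{1}$ for a scalar $\lambda\in\mathbb{F}_{p^{n}}$ to be chosen; by Corollary~\ref{space}, $\lambda\varphi_{1}$ is again a flow of $\Gamma_{1}$, and it is nowhere-zero precisely when $\lambda\ne0$. Off $D$ the flow $\psi$ is then automatically nonzero, while on a common edge $e\in D$ one has $\psi(e)=\lambda\varphi_{1}(e)+\varphi_{2}(e)$, which vanishes for exactly one value $\lambda=-\varphi_{2}(e)\varphi_{1}(e)^{-1}$ (well-defined as $\varphi_{1}(e)\ne0$, and itself nonzero as $\varphi_{2}(e)\ne0$). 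Hence the set of bad scalars is $\{0\}\cup\{-\varphi_{2}(e)\varphi_{1}(e)^{-1}:e\in D\}$, of size at most $1+|D|\le p^{n}-1<|\mathbb{F}_{p^{n}}|$, so a good $\lambda$ exists. This is exactly where the hypothesis $|D|\le p^{n}-2$ enters, and it is the crux of the argument.

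Finally I would deduce the general statement by induction on $s$. Writing $\Sigma_{\ell}=\cup_{i=1}^{\ell}\Gamma_{i}$, the base case $\Sigma_{1}=\Gamma_{1}$ holds by hypothesis; for the step, $\Sigma_{\ell}=\Sigma_{\ell-1}\cup\Gamma_{\ell}$, where $\Sigma_{\ell-1}$ carries a nowhere-zero $A$-flow by induction, $\Gamma_{\ell}$ carries one by hypothesis, and $|E(\Sigma_{\ell-1})\cap E(\Gamma_{\ell})|\le p^{n}-2$ by assumption, so the two-subgraph case applies and $\Sigma_{\ell}$ has a nowhere-zero $A$-flow; taking $\ell=s$ finishes. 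I expect the main obstacle to be purely the two-subgraph gluing: using the additive (rather than ``matching'') gluing and verifying conservation at shared vertices, after which the counting over $\mathbb{F}_{p^{n}}$ is immediate.
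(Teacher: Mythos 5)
Your proposal is correct and follows essentially the same route as the paper: reduce to $A=\mathbb{F}_{p^n}$ via Lemma \ref{equal}, extend/glue the two flows additively (your piecewise definition of $\psi$ is exactly the sum of the two zero-extensions the paper uses), scale one of them by a nonzero field element, and count that at most $1+|D|\le p^n-1<p^n$ scalars are bad; the induction on $s$ is also identical. The only cosmetic differences are that the paper parametrises the scalar as a power of a generator of $\mathbb{F}_{p^n}^{*}$ and leans on Corollary \ref{space} for conservation where you verify it by hand.
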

\begin{proof}
By Lemma \ref{equal}, it is suffices to prove that the theorem is true for the case when $A$ is elementary abelian. Now we assume that $A$ is an elementary abelian group of order $p^n$ and identify it with the additive group of the finite field $\mathbb{F}_{p^{n}}$. We proceed the proof by induction on $s$.

The theorem is trivial for $s=1$. Now we assume $s\geq 2$ and the theorem is true for $s-1$. Set $\Theta_{1}=\cup_{i=1}^{s-1}\Gamma_{i}$ and $\Theta_{2}=\Gamma_{s}$. Then $\Theta_{2}$ admits a nowhere-zero $A$-flow, $\Gamma=\Theta_{1}\cup\Theta_{2}$, and $\Theta_{1}$ and $\Theta_{2}$ have at most $p^{n}-2$ common edges. By induction hypothesis, $\Theta_{1}$ admits a nowhere-zero $A$-flow. Let $\DD$ be an orientation of $\Gamma$, and $\DD_{i}$ the restriction of $\DD$ to $\Theta_{i}$ for $i=1,2$.
Since $\Theta_{i}$ admits a nowhere-zero $A$-flow, there exists a nowhere-zero $A$-flow $(\DD_{i},\varphi_{i})$ of  $\Theta_{i}$. Define functions $\varphi'_{i}:~E(\Gamma)\rightarrow A$ by the rule:
\begin{equation*}
 \varphi'_{i}(e)=\left\{
    \begin{array}{ll}
      \varphi_{i}(e), &\hbox{if}~e\in E(\Theta_{i}); \\
      0, &\hbox{if}~e\notin E(\Theta_{i}).
    \end{array}
  \right.
\end{equation*}
Then $(\DD,\varphi'_{i})$ is an $A$-flow of $\Gamma$.
By \cite[Theorem 4.6]{BJN1994}, the multiplicative group $\mathbb{F}_{p^{n}}^{*}$ of $\mathbb{F}_{p^{n}}$ is a cyclic group of order $p^{n}-1$. Let $b$ be a generator of $\mathbb{F}_{p^{n}}^{*}$. Let $e_{1},e_{2},\cdots,e_{k}$ be all the common edges of $\Theta_{1}$ and $\Theta_{2}$. Then $k\leq p^{n}-2$. Therefore there exists $j\in \{0,1,\ldots,p^{n}-2\}$ such that $\varphi'_{1}(e_{i})\neq b^j\varphi'_{2}(e_{i})$ for all $i\in\{1,\ldots,k\}$. Set $\varphi=\varphi'_{1}-b^j\varphi'_{2}$. By Corollary \ref{space}, $(\DD,\varphi)$ is an $A$-flow of $\Gamma$. Take an arbitrary $e\in E(\Gamma)$. If $e\in E(\Theta_{1})-E(\Theta_{2})$, then $\varphi(e)=\varphi_1(e)\neq 0$. If $e\in E(\Theta_{2})-E(\Theta_{1})$, then $\varphi(e)=\varphi_2(e)\neq 0$.
If $e\in E(\Theta_{1})\cap E(\Theta_{2})$, then $\varphi(e)=\varphi_1(e)-b^j\varphi_{2}(e)\neq 0$. Therefore $(\DD,\varphi)$ is a nowhere-zero $A$-flow of $\Gamma$.
\end{proof}
The following corollary includes some spacial cases of Theorem \ref{common}.
\begin{coro}
\label{kcommon}
Let $\Gamma$ be a graph, suppose that there are $n$ ($\geq 2$) subgraphs $\Gamma_{1},\Gamma_{2},...,\Gamma_{n}$ of $\Gamma$ such that $\Gamma=\cup_{i=1}^{n}\Gamma_{i}$, $\Gamma_{i}$ admits a nowhere-zero $k$-flow where $k=3,4~\mbox{or}~5$ for each $i\in\{1,2,...,n\}$, and $\cup_{i=1}^{l-1}\Gamma_{i}$ and $\Gamma_{l}$ have at most $k-2$ common edges for each $l\in\{2,...,n\}$, then $\Gamma$ admits a nowhere-zero $k$-flow.
\end{coro}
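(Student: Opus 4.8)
The plan is to recognize that each of the three admissible values is a prime power—$3=3^{1}$, $4=2^{2}$, and $5=5^{1}$—so that the corollary is nothing more than the specialization of Theorem \ref{common} in which the group order equals $k$. The whole argument is a matter of translating the language of integer $k$-flows into that of $A$-flows for a suitably chosen abelian group $A$.

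First I would pass from $k$-flows to $\Z_{k}$-flows. As recalled in the introduction, a graph admits a nowhere-zero $k$-flow if and only if it admits a nowhere-zero $\Z_{k}$-flow. Hence each $\Gamma_{i}$, which carries a nowhere-zero $k$-flow by hypothesis, admits a nowhere-zero $\Z_{k}$-flow. Setting $A=\Z_{k}$, an abelian group whose order $k$ is a prime power, this says precisely that each $\Gamma_{i}$ admits a nowhere-zero $A$-flow, which is the first hypothesis of Theorem \ref{common}.

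Next I would match the remaining hypothesis: the bound of at most $k-2$ common edges between $\cup_{i=1}^{l-1}\Gamma_{i}$ and $\Gamma_{l}$ is exactly the bound of at most $|A|-2$ common edges demanded by Theorem \ref{common}, since $|A|=k$. With all the hypotheses of Theorem \ref{common} verified for the group $A=\Z_{k}$ and for the given family of $n$ subgraphs, I would invoke that theorem to conclude that $\Gamma$ admits a nowhere-zero $A$-flow, and then translate back through the $k$-flow/$\Z_{k}$-flow equivalence to obtain a nowhere-zero $k$-flow of $\Gamma$.

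I expect no genuine obstacle here; the corollary is an immediate consequence of Theorem \ref{common}. The only point that requires care is the observation that $4$, unlike $6$, is a prime power, which is exactly why the list of admissible values stops at $5$ and why the argument does not cover $k=6$.
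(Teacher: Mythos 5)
Your proof is correct and matches the paper's intent exactly: the paper gives no explicit proof, presenting the corollary as a direct specialization of Theorem \ref{common}, and your argument (noting that $3$, $4$, $5$ are prime powers, passing to $\Z_{k}$-flows via the standard equivalence, and applying the theorem with $A=\Z_{k}$) is precisely that specialization spelled out.
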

\section{Nowhere-zero $4$-flows}
A spacial case of Corollary \ref{kcommon} is the following proposition.
\begin{pro}
\label{4common}
Let $\Gamma_{1}$ and $\Gamma_{2}$ be two subgraphs of a graph $\Gamma$ such that $\Gamma=\Gamma_{1}\cup\Gamma_{2}$ and $|E(\Gamma_{1})\cap E(\Gamma_{2})|\leq2$. If both $\Gamma_{1}$ and $\Gamma_{2}$ have nowhere-zero $4$-flows, then $\Gamma$ admits a nowhere-zero $4$-flow.
\end{pro}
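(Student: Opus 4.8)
The plan is to recognize this as the instance $n=2$, $k=4$ of Corollary \ref{kcommon} (equivalently, the $s=2$, $p^n=4$ case of Theorem \ref{common}), so that almost no new work is required beyond matching the hypotheses. First I would recall that admitting a nowhere-zero $4$-flow is the same as admitting a nowhere-zero $A$-flow for any abelian group $A$ of order $4$; this is the well-known equivalence recorded in the introduction and also follows from Lemma \ref{equal}. Since $4=2^{2}$ is a prime power, we may take $A$ to be the elementary abelian group of order $4$ and identify it with the additive group of $\mathbb{F}_{4}$, putting us squarely in the setting of Theorem \ref{common}.

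Next I would verify that the two-subgraph hypothesis here is precisely the $s=2$ instance of Theorem \ref{common}. With $p^{n}=4$, the chain condition of that theorem requires only that $\cup_{i=1}^{1}\Gamma_{i}=\Gamma_{1}$ and $\Gamma_{2}$ share at most $p^{n}-2=2$ common edges, which is exactly the assumption $|E(\Gamma_{1})\cap E(\Gamma_{2})|\leq 2$. Both $\Gamma_{1}$ and $\Gamma_{2}$ are assumed to carry nowhere-zero $4$-flows, hence nowhere-zero $\mathbb{F}_{4}$-flows. Applying Theorem \ref{common} then yields a nowhere-zero $\mathbb{F}_{4}$-flow of $\Gamma$, and translating back via Lemma \ref{equal} gives a nowhere-zero $4$-flow of $\Gamma$, as desired.

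There is no genuine obstacle here: the statement is a direct specialization, and all the real content lives in Theorem \ref{common}, whose proof exploits the $\mathbb{F}_{p^{n}}$-linear structure of the flow space (Corollary \ref{space}) to pick a scalar $b^{j}$ that avoids all the finitely many forbidden values on the common edges. The one point that warrants care is the group-versus-field identification: one must pass to the elementary abelian group $\Z_{2}\times\Z_{2}\cong\mathbb{F}_{4}$ rather than to $\Z_{4}$, because the selection-of-scalar argument needs the field structure of $\mathbb{F}_{4}$ (in particular a cyclic multiplicative group of order $p^{n}-1=3$). Lemma \ref{equal} is precisely what makes this substitution harmless for the existence of nowhere-zero flows, so the specialization goes through immediately.
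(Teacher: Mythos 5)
Your proposal is correct and matches the paper exactly: the paper presents Proposition \ref{4common} as the special case $n=2$, $k=4$ of Corollary \ref{kcommon} (equivalently $s=2$, $p^{n}=4$ in Theorem \ref{common}), with Lemma \ref{equal} handling the passage between $\Z_{4}$ and the additive group of $\mathbb{F}_{4}$, just as you describe. No further comment is needed.
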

\begin{figure}[h]
  \centering
  \begin{tikzpicture}
\tikzstyle{every node}=[draw,shape=circle,label distance=-0.5mm,inner sep=1pt];
\node (0) at (-54:1.5) [fill,label=below right:\tiny{0}] {};
\node (1) at (18:1.5) [fill,label=below right:\tiny{1}] {};
\node (2) at (18+72:1.5) [fill,label=above:\tiny{2}] {};
\node (3) at (18+2*72:1.5) [fill,label=below left:\tiny{3}] {};
\node (4) at (18+3*72:1.5) [fill,label=below left:\tiny{4}] {};
\node (5) at (-54:0.8) [fill,label=right:\tiny{5}] {};
\node (6) at (18:0.8) [fill,label=below right:\tiny{6}] {};
\node (7) at (18+72:0.8) [fill,label=right:\tiny{7}] {};
\node (8) at (18+2*72:0.8) [fill,label=below left:\tiny{8}] {};
\node (9) at (18+3*72:0.8) [fill,label=left:\tiny{9}] {};
\draw
(3)--(4)--(0)(5)--(7)--(9)--(6)--(8)
(5)(1)--(6)(2)--(7)(4)--(9);
\draw[thick] (0)--(1)(8)--(5)(2)--(3);
\end{tikzpicture}
\begin{tikzpicture}
\tikzstyle{every node}=[draw,shape=circle,label distance=-0.5mm,inner sep=1pt];
\node (0) at (-54:1.5) [fill,label=below right:\tiny{0}] {};
\node (1) at (18:1.5) [fill,label=below right:\tiny{1}] {};
\node (2) at (18+72:1.5) [fill,label=above:\tiny{2}] {};
\node (3) at (18+2*72:1.5) [fill,label=below left:\tiny{3}] {};
\node (5) at (-54:0.8) [fill,label=right:\tiny{5}] {};
\node (8) at (18+2*72:0.8) [fill,label=below left:\tiny{8}] {};
\draw
(1)--(2)(0)--(5)(3)--(8);
\draw[thick] (0)--(1)(2)--(3)(8)--(5);
\end{tikzpicture}
\hspace{8mm}
\begin{tikzpicture}
\tikzstyle{every node}=[draw,shape=circle,label distance=-0.5mm,inner sep=1pt];
\node (0) at (-54:1.5) [fill,label=below right:\tiny{0}] {};
\node (1) at (18:1.5) [fill,label=below right:\tiny{1}] {};
\node (2) at (18+72:1.5) [fill,label=above:\tiny{2}] {};
\node (3) at (18+2*72:1.5) [fill,label=below left:\tiny{3}] {};
\node (4) at (18+3*72:1.5) [fill,label=below left:\tiny{4}] {};
\node (5) at (-54:0.8) [fill,label=right:\tiny{5}] {};
\node (6) at (18:0.8) [fill,label=below right:\tiny{6}] {};
\node (7) at (18+72:0.8) [fill,label=right:\tiny{7}] {};
\node (8) at (18+2*72:0.8) [fill,label=below left:\tiny{8}] {};
\node (9) at (18+3*72:0.8) [fill,label=left:\tiny{9}] {};
\draw
(1)--(2)--(3)--(4)--(0)(7)--(9)--(6)--(8)--(5)
(1)--(6)(3)--(8)(4)--(9);
\draw[thick] (0)--(1)--(2)(5)--(7);
\end{tikzpicture}
\begin{tikzpicture}
\tikzstyle{every node}=[draw,shape=circle,label distance=-0.5mm,inner sep=1pt];
\node (0) at (-54:1.5) [fill,label=below right:\tiny{0}] {};
\node (1) at (18:1.5) [fill,label=below right:\tiny{1}] {};
\node (2) at (18+72:1.5) [fill,label=above:\tiny{2}] {};
\node (5) at (-54:0.8) [fill,label=right:\tiny{5}] {};
\node (7) at (18+72:0.8) [fill,label=right:\tiny{7}] {};
\draw
(0)--(5)(2)--(7);
\draw[thick] (0)--(1)--(2)(5)--(7);
\end{tikzpicture}
\caption{Subgraphs of Peterson graph}\label{peterson}
\end{figure}
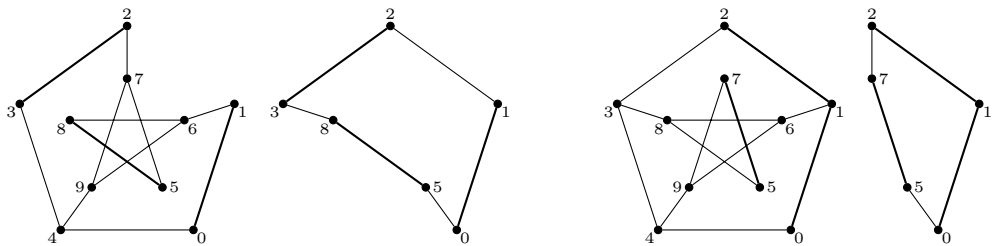
Note that the upper bound of $|E(\Gamma_{1})\cap E(\Gamma_{2})|$ in
Proposition \ref{4common} is sharp. For example, in Figure \ref{peterson}, the Peterson graph is both the union of the first two graphs and the union of the last two graphs. The set of the common edges of the first two graphs forms a matching with $3$-edges and that of the last two graphs is a disjoint union of an edge and a $2$-path. It is straightforward to check that all the four subgraphs in Figure \ref{peterson} admit nowhere-zero $4$-flows. However, the Peterson graph does not admit any nowhere-zero $4$-flow.

The following theorem shows that the upper bound of $|E(\Gamma_{1})\cap E(\Gamma_{2})|$ in Proposition \ref{4common} can be improved to $3$ under the condition that the subgraph induced by $E(\Gamma_{1})\cap E(\Gamma_{2})$ is connected.
\begin{theorem}
\label{union}
Let $\Gamma_{1}$ and $\Gamma_{2}$ be two subgraphs of a graph $\Gamma$ such that $\Gamma=\Gamma_{1}\cup\Gamma_{2}$, $|E(\Gamma_{1})\cap E(\Gamma_{2})|\leq3$ and $E(\Gamma_{1})\cap E(\Gamma_{2})$ induces a connected subgraph of $\Gamma$. If both $\Gamma_{1}$ and $\Gamma_{2}$ have nowhere-zero $4$-flows, then $\Gamma$ admits a nowhere-zero $4$-flow.
\end{theorem}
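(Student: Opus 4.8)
The plan is to run the flow-module argument of Theorem \ref{common} over the field $\mathbb{F}_{4}$ and to spend the extra effort only on the common edges. First, by Lemma \ref{equal} I may assume that $A$ is the additive group of $\mathbb{F}_{4}$, so that by Corollary \ref{space} all $A$-flows of $\Gamma$ with a fixed orientation $\DD$ form a linear space over $\mathbb{F}_{4}$. I fix nowhere-zero $A$-flows $(\DD_{1},\varphi_{1})$ of $\Gamma_{1}$ and $(\DD_{2},\varphi_{2})$ of $\Gamma_{2}$ (restrictions of $\DD$), extend them by zero to functions $\varphi'_{1},\varphi'_{2}$ on $E(\Gamma)$, and set $\varphi=\varphi'_{1}-\lambda\varphi'_{2}$ for a scalar $\lambda\in\mathbb{F}_{4}^{*}=\{1,b,b^{2}\}$. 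Exactly as in Theorem \ref{common}, $(\DD,\varphi)$ is an $A$-flow of $\Gamma$; and since $\mathbb{F}_{4}$ has characteristic $2$, it equals $\varphi_{1}$ on $E(\Gamma_{1})-M$ and $\lambda\varphi_{2}$ on $E(\Gamma_{2})-M$, where $M:=E(\Gamma_{1})\cap E(\Gamma_{2})$, so it is automatically nowhere-zero off $M$. Everything thus reduces to arranging that $\varphi(e_{i})=\varphi_{1}(e_{i})-\lambda\varphi_{2}(e_{i})\neq0$ on the (at most three) common edges $e_{1},\dots,e_{k}$.

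A single scalar already suffices when $k\le2$: the forbidden values of $\lambda$ are the ratios $\varphi_{1}(e_{i})/\varphi_{2}(e_{i})$, and with at most two of them one of the three nonzero scalars survives, which recovers Proposition \ref{4common}. When $k=3$ this fails precisely when the three ratios exhaust $\mathbb{F}_{4}^{*}$, so that is the only case I must defeat, and it is here that the connectedness of $M$ enters. My first tool is a cycle flow: in characteristic $2$ a flow of $\Gamma$ supported on a cycle assigns one common value $c$ to all its edges and perturbs nothing else. Hence whenever the connected $3$-edge subgraph $M$ contains a cycle -- that is, when $M$ is a triangle, or when $M$ contains two parallel edges -- I can add such a cycle flow supported inside $M$; since $|\mathbb{F}_{4}|=4>3$, I can then choose the added value(s) to avoid the at most three forbidden values $\varphi_{1}(e_{i})-\lambda\varphi_{2}(e_{i})$ and clear every common edge simultaneously. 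This disposes of all cases in which $M$ is not a tree.

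The main obstacle is therefore the two acyclic connected shapes, $M$ a path $u_{1}u_{2}u_{3}u_{4}$ or a star $K_{1,3}$, for which $\Gamma$ carries no nonzero flow supported on $M$ and the cycle-flow correction is unavailable. A first reduction handles degenerate subcases: if an internal vertex of a common $2$-path has degree $2$ in $\Gamma$, then Lemma \ref{2path} lets me suppress it, lowering the number of common edges to at most two and invoking Proposition \ref{4common}; so the genuinely hard situation is when every internal vertex of the path (or the centre of the star) has degree at least $3$ in $\Gamma$. To break this I would use the remaining freedom: replace $\varphi_{1}$ (or $\varphi_{2}$) by another nowhere-zero flow of $\Gamma_{1}$ ($\Gamma_{2}$), obtained by adding a flow supported on a cycle of $\Gamma_{1}$ meeting $M$ in a single edge, so as to change one ratio while preserving nowhere-zeroness on the rest of $\Gamma_{1}$. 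A useful rigidity in the star case is that if the centre has degree $3$ in $\Gamma$ then conservation forces $\varphi(e_{1})+\varphi(e_{2})+\varphi(e_{3})=0$, and three nonzero elements of $\mathbb{F}_{4}$ sum to zero exactly when they are pairwise distinct; this pins down the admissible value patterns and reduces the star to a finite check. Equivalently, one can argue through the parity-class picture behind Lemma \ref{parity} and Remark \ref{3parity}: writing each $E(\Gamma_{i})$ as a union of two even subgraphs and recombining by symmetric differences covers all of $E(\Gamma)$ except possibly some common edges, and the leftover is controlled by a permutation of the three parity classes on $M$ (or, in the last resort, by the even-subgraph criterion of Lemma \ref{evenly}).

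I expect the delicate point -- the crux of the proof -- to be maintaining the nowhere-zero condition on the few edges disturbed by these corrections: connectedness of $M$ and the bound $|M|\le3$ are precisely what force the needed corrections to fit within the four available values of $\mathbb{F}_{4}$, while the Petersen example of Figure \ref{peterson} confirms that a hypothesis beyond $|M|\le3$ is indispensable, since three common edges forming a matching can fail.
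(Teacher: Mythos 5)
Your reduction over $\mathbb{F}_{4}$ is sound as far as it goes: extending by zero, taking $\varphi=\varphi'_{1}-\lambda\varphi'_{2}$, and observing that in characteristic $2$ the only edges at risk are the common ones correctly recovers Proposition \ref{4common} for $k\le2$, and your cycle-flow correction genuinely disposes of the cases where $M=E(\Gamma_{1})\cap E(\Gamma_{2})$ contains a cycle (triangle or parallel edges), since the correction is supported inside $M$ and at most three values of the correcting scalar are forbidden. But the theorem's real content is exactly the two acyclic shapes you defer to the end --- the $3$-path and $K_{1,3}$ --- and there your argument has a genuine gap. The proposed fix is to perturb $\varphi_{1}$ by a flow supported on a cycle of $\Gamma_{1}$ meeting $M$ in a single edge, ``preserving nowhere-zeroness on the rest of $\Gamma_{1}$.'' Three things are unjustified: (i) such a cycle need not exist (an edge of $M$ can be a bridge of $\Gamma_{1}$, or every cycle of $\Gamma_{1}$ through it may use a second edge of $M$); (ii) even when it exists, adding $c$ along a cycle $C$ requires $c\neq0$ and $c\neq\varphi_{1}(e)$ for every $e\in C$, and these forbidden values can exhaust all of $\mathbb{F}_{4}$ once $|C|\ge3$, so nowhere-zeroness of the perturbed flow is not guaranteed; (iii) even if one ratio $\varphi_{1}(e_{i})/\varphi_{2}(e_{i})$ is changed, nothing shows the new triple of ratios does not again exhaust $\mathbb{F}_{4}^{*}$. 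Your ``rigidity'' observation for the star applies only when the centre has degree exactly $3$ in $\Gamma$, which is a special case, and your closing sentence (``equivalently, one can argue through the parity-class picture\,\dots'') points at a method without executing it.

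For comparison, the paper does not attempt to repair the scalar argument at all in the hard case. It passes to parity subgraph decompositions $\Gamma_{i}=\Gamma_{i1}\cup\Gamma_{i2}\cup\Gamma_{i3}$, classifies the seven ways the three common edges can distribute among the nine pairs of parity classes, kills six of the seven by exhibiting $\Gamma$ as a union of two even graphs obtained as symmetric differences, and for the remaining distribution runs an induction on $|E(\Gamma)|$ (seeded by Lai's result for small graphs), using the $2$-path suppression of Lemma \ref{2path} inside the parity classes, a further case analysis on the five connected shapes of $\Gamma[e_{1},e_{2},e_{3}]$, and finally a parity-counting verification of the hypothesis of Lemma \ref{evenly}. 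None of this machinery is present, even in outline, in your proposal; the acyclic cases therefore remain unproved.
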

\begin{proof}
We proceed the proof by induction on the size of $\Gamma$.
The theorem follows Proposition \ref{4common} if $|E(\Gamma_{1})\cap E(\Gamma_{2})|\leq2$. Now we assume $|E(\Gamma_{1})\cap E(\Gamma_{2})|=3$.
We may also assume that $\Gamma_{1}$ and $\Gamma_{2}$ are both connected (for otherwise we consider their connected components). Since both $\Gamma_{1}$ and $\Gamma_{2}$ have nowhere-zero $4$-flows and $\Gamma=\Gamma_{1}\cup\Gamma_{2}$, $\Gamma$ is $2$-edge-connected. By \cite[Proposition 3.1]{L1995}, a $2$-edge-connected graph of order less than $17$ either admits a nowhere-zero $4$-flow or can be contracted to the Petersen graph. Therefore the theorem holds for graphs of size less than $15$. Suppose that $\Gamma$ is of size at least $15$ and the theorem holds for graphs of size less than that of $\Gamma$. We will prove the theorem by confirming that $\Gamma$ admits a nowhere-zero $4$-flow.

Since $\Gamma_{i}$ admits a nowhere-zero $4$-flow, by Lemma \ref{parity} and the remark below it, $\Gamma_{i}$ has a parity subgraph decomposition
$\Gamma_{i}=\Gamma_{i1}\cup\Gamma_{i2}\cup\Gamma_{i3}$ where $i=1,2$. Define a bipartite graph $\Sigma$ with two parts $\{11,12,13\}$ and $\{21,22,23\}$ by the rule:`$1i$ and $2j$ are linked by $k$ edges if and only if $\Gamma_{1i}$ and $\Gamma_{2j}$ have $k$ common edges, $i,j=1,2,3$'. Since $|E(\Gamma_{1})\cap E(\Gamma_{2})|=3$, the size of $\Sigma$ is $3$. Therefore, up to isomorphism, $\Sigma$ is one of the seven graphs in Figure \ref{sigma}.
\begin{figure}[h]
\centering
\begin{tikzpicture}
\tikzstyle{every node}=[draw,shape=circle,label distance=-0.5mm,inner sep=1pt];
\node (11) at (0,1) [fill,label=above:\tiny{11}] {};
\node (12) at (0,0) [fill,label=above:\tiny{12}] {};
\node (13) at (0,-1) [fill,label=below:\tiny{13}] {};
\node (21) at (1,1) [fill,label=above:\tiny{21}] {};
\node (22) at (1,0) [fill,label=above:\tiny{22}] {};
\node (23) at (1,-1) [fill,label=below:\tiny{23}] {};
\draw
(11) -- (21)(11)..controls (0.5,1.25)..(21)(11)..controls (0.5,0.75)..(21);
\end{tikzpicture}
\hspace{6mm}
\begin{tikzpicture}
\tikzstyle{every node}=[draw,shape=circle,label distance=-0.5mm,inner sep=1pt];
\node (11) at (0,1) [fill,label=above:\tiny{11}] {};
\node (12) at (0,0) [fill,label=above:\tiny{12}] {};
\node (13) at (0,-1) [fill,label=below:\tiny{13}] {};
\node (21) at (1,1) [fill,label=above:\tiny{21}] {};
\node (22) at (1,0) [fill,label=above:\tiny{22}] {};
\node (23) at (1,-1) [fill,label=below:\tiny{23}] {};
\draw
(12) -- (22)(11)..controls (0.5,1.25)..(21)(11)..controls (0.5,0.75)..(21);
\end{tikzpicture}
\hspace{6mm}
\begin{tikzpicture}
\tikzstyle{every node}=[draw,shape=circle,label distance=-0.5mm,inner sep=1pt];
\node (11) at (0,1) [fill,label=above:\tiny{11}] {};
\node (12) at (0,0) [fill,label=above:\tiny{12}] {};
\node (13) at (0,-1) [fill,label=below:\tiny{13}] {};
\node (21) at (1,1) [fill,label=above:\tiny{21}] {};
\node (22) at (1,0) [fill,label=above:\tiny{22}] {};
\node (23) at (1,-1) [fill,label=below:\tiny{23}] {};
\draw
(11) -- (22)(11)..controls (0.5,1.25)..(21)(11)..controls (0.5,0.75)..(21);
\end{tikzpicture}
\hspace{6mm}
\begin{tikzpicture}
\tikzstyle{every node}=[draw,shape=circle,label distance=-0.5mm,inner sep=1pt];
\node (11) at (0,1) [fill,label=above:\tiny{11}] {};
\node (12) at (0,0) [fill,label=above:\tiny{12}] {};
\node (13) at (0,-1) [fill,label=below:\tiny{13}] {};
\node (21) at (1,1) [fill,label=above:\tiny{21}] {};
\node (22) at (1,0) [fill,label=above:\tiny{22}] {};
\node (23) at (1,-1) [fill,label=below:\tiny{23}] {};
\draw
(11) -- (21)(11)--(22)(12)--(21);
\end{tikzpicture}
\hspace{6mm}
\begin{tikzpicture}
\tikzstyle{every node}=[draw,shape=circle,label distance=-0.5mm,inner sep=1pt];
\node (11) at (0,1) [fill,label=above:\tiny{11}] {};
\node (12) at (0,0) [fill,label=above:\tiny{12}] {};
\node (13) at (0,-1) [fill,label=below:\tiny{13}] {};
\node (21) at (1,1) [fill,label=above:\tiny{21}] {};
\node (22) at (1,0) [fill,label=above:\tiny{22}] {};
\node (23) at (1,-1) [fill,label=below:\tiny{23}] {};
\draw
(11) -- (21)(12)--(22)(13)--(23);
\end{tikzpicture}
\hspace{6mm}
\begin{tikzpicture}
\tikzstyle{every node}=[draw,shape=circle,label distance=-0.5mm,inner sep=1pt];
\node (11) at (0,1) [fill,label=above:\tiny{11}] {};
\node (12) at (0,0) [fill,label=above:\tiny{12}] {};
\node (13) at (0,-1) [fill,label=below:\tiny{13}] {};
\node (21) at (1,1) [fill,label=above:\tiny{21}] {};
\node (22) at (1,0) [fill,label=above:\tiny{22}] {};
\node (23) at (1,-1) [fill,label=below:\tiny{23}] {};
\draw
(11) -- (21)(11)--(22)(13)--(23);
\end{tikzpicture}
\hspace{6mm}
\begin{tikzpicture}
\tikzstyle{every node}=[draw,shape=circle,label distance=-0.5mm,inner sep=1pt];
\node (11) at (0,1) [fill,label=above:\tiny{11}] {};
\node (12) at (0,0) [fill,label=above:\tiny{12}] {};
\node (13) at (0,-1) [fill,label=below:\tiny{13}] {};
\node (21) at (1,1) [fill,label=above:\tiny{21}] {};
\node (22) at (1,0) [fill,label=above:\tiny{22}] {};
\node (23) at (1,-1) [fill,label=below:\tiny{23}] {};
\draw
(11) -- (21)(11)--(22)(11)--(23);
\end{tikzpicture}
\caption{$\Sigma$}\label{sigma}
\end{figure}

Firstly we assume that $\Sigma$ is not the last graph in Figure \ref{sigma}. Set
\begin{equation*}
\Gamma'_1=(\Gamma_{11}\cup\Gamma_{12})\triangle(\Gamma_{21}\cup\Gamma_{23})~
\mbox{and}~\Gamma'_2=(\Gamma_{11}\cup\Gamma_{13})\triangle
(\Gamma_{22}\cup\Gamma_{23}).
\end{equation*}
Then $\Gamma=\Gamma'_1\cup\Gamma'_2$. Since $\Gamma_{i1}$, $\Gamma_{i2}$ and $\Gamma_{i3}$ are pairwise edge-disjoint   parity subgraphs of $\Gamma_{i}$ for $i=1,2$, all the four graphs $\Gamma_{11}\cup\Gamma_{12}$, $\Gamma_{21}\cup\Gamma_{23}$,
$\Gamma_{11}\cup\Gamma_{13}$ and $\Gamma_{22}\cup\Gamma_{23}$ are even. It follows that both $\Gamma'_1$ and $\Gamma'_2$ are even graphs. It is well known \cite[Theorem 3.1.2]{Z1997} that a graph admits a nowhere-zero $4$-flow if and only if it is a union of two even graphs. Therefore $\Gamma$ admits a nowhere-zero $4$-flow.

Now we assume that $\Sigma$ is the last graph in Figure \ref{sigma}. Set
\begin{equation*}
\Gamma_{11}\cap\Gamma_{21}=\{e_1\},~ \Gamma_{11}\cap\Gamma_{22}=\{e_2\}~\mbox{and}~ \Gamma_{11}\cap\Gamma_{23}=\{e_3\}.
\end{equation*}
We divide the remainder proof into the following two cases.

\textsf{Case 1.} $\Gamma_{i_{0}j_{0}}$ contains a $2$-path $\Pi$ for some $i_{0}\in\{1,2\}$ and $j_{0}\in\{1,2,3\}$ such that $\{e_1,e_2,e_3\}\cap E(\Pi)=\emptyset$.

Let $u$ and $v$ be the two ends of the $2$-path $\Pi$. Let $\Gamma_{i_{0}j_{0}}'$ be a graph obtained from $\Gamma_{i_{0}j_{0}}$ by removing $\Pi$ and adding a new edge linking $u$ and $v$ (an edge not contained in $E(\Gamma)$). Write $\Gamma'_{ij}:=\Gamma_{ij}$ for $i\in\{1,2\}-\{i_{0}\}$ and
$j\in\{1,2,3\}-\{j_{0}\}$. Set $\Gamma'_{1}=\Gamma'_{11}\cup\Gamma'_{12}\cup\Gamma'_{13}$, $\Gamma'_{2}=\Gamma'_{21}\cup\Gamma'_{22}\cup\Gamma'_{23}$ and $\Gamma'=\Gamma'_{1}\cup\Gamma'_{2}$. Then $\Gamma'_{i1}$, $\Gamma'_{i2}$ and $\Gamma'_{i3}$ are pairwise edge-disjoint parity subgraphs of $\Gamma'_{i}$ for $i=1,2$. By Lemma \ref{parity}, both $\Gamma'_{1}$ and $\Gamma'_{2}$ have nowhere-zero $4$-flows. Note that $|E(\Gamma'_{1})\cap E(\Gamma'_{2})|=3$ and $\Gamma'$ is of size less than that of $\Gamma$. By induction hypothesis, $\Gamma'$ admits a nowhere-zero $4$-flow. Then, by Lemma \ref{2path},
$\Gamma$ admits a nowhere-zero $4$-flow.

\textsf{Case 2.} For any $i\in\{1,2\}$ and $j\in\{1,2,3\}$, if $\Gamma_{ij}$ contains a $2$-path $\Pi$, then  $\{e_1,e_2,e_3\}\cap E(\Pi)\neq\emptyset$.

\begin{figure}[h]
  \centering
\begin{tikzpicture}
\tikzstyle{every node}=[draw,shape=circle,label distance=-0.5mm,inner sep=1pt];
\node (11) at (0,0) [fill,label=below:\tiny{$u_1$}] {};
\node (12) at (1.2,0) [fill,label=below:\tiny{$u_2$}] {};
\draw
(11) -- (12)(11)..controls (0.6,0.4)..(12)(11)..controls (0.6,-0.4)..(12);
\end{tikzpicture}
\hspace{3mm}
\begin{tikzpicture}
\tikzstyle{every node}=[draw,shape=circle,label distance=-0.5mm,inner sep=1pt];
\node (11) at (0,0) [fill,label=below:\tiny{$u_1$}] {};
\node (12) at (1.2,0) [fill,label=below:\tiny{$u_2$}] {};
\node (13) at (2.4,0) [fill,label=below:\tiny{$u_3$}] {};
\draw
(11)--(12)(12)..controls (1.8,0.4)..(13)(12)..controls (1.8,-0.4)..(13);
\end{tikzpicture}
\hspace{3mm}
\begin{tikzpicture}
\tikzstyle{every node}=[draw,shape=circle,label distance=-0.5mm,inner sep=1pt];
\node (11) at (0,0) [fill,label=left:\tiny{$u_1$}] {};
\node (12) at (1.2,0.5) [fill,label=right:\tiny{$u_2$}] {};
\node (13) at (1.2,0) [fill,label=right:\tiny{$u_3$}] {};
\node (21) at (1.2,-0.5) [fill,label=right:\tiny{$u_4$}] {};
\draw
(11) -- (12)(11) --(13)(11) --(21);
\end{tikzpicture}
\hspace{3mm}
\begin{tikzpicture}
\tikzstyle{every node}=[draw,shape=circle,label distance=-0.5mm,inner sep=1pt];
\node (11) at (330:0.7) [fill,label=below right:\tiny{$u_1$}] {};
\node (12) at (90:0.7) [fill,label=above:\tiny{$u_2$}] {};
\node (13) at (210:0.7) [fill,label=below left:\tiny{$u_3$}] {};
\draw
(11)--(12)--(13)--(11);
\end{tikzpicture}
\hspace{3mm}
\begin{tikzpicture}
\tikzstyle{every node}=[draw,shape=circle,label distance=-0.5mm,inner sep=1pt];
\node (11) at (0,0) [fill,label=below:\tiny{$u_1$}] {};
\node (12) at (1.2,0) [fill,label=below:\tiny{$u_2$}] {};
\node (13) at (2.4,0) [fill,label=below:\tiny{$u_3$}] {};
\node (21) at (3.6,0) [fill,label=below:\tiny{$u_4$}] {};
\draw
(11) -- (12)--(13)--(21);
\end{tikzpicture}
  \caption{$\Gamma[e_1,e_2,e_3]$}\label{3edges}
\end{figure}
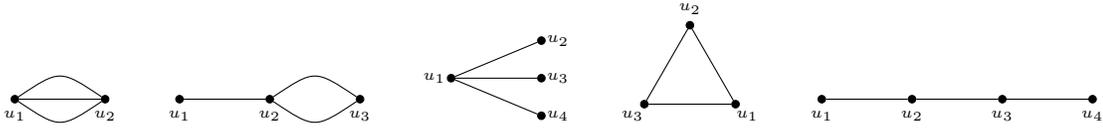
In this case, the following claim holds.
\textbf{Claim A}: for any $i\in\{1,2\}$ and $j\in\{1,2,3\}$, every connected component of $\Gamma_{ij}$ not containing edges in $\{e_1,e_2,e_3\}$ is either an isolated vertex or an edge.

Use $\Gamma[e_1,e_2,e_3]$ to denote the subgraph of $\Gamma$ induced by the edge set $\{e_1,e_2,e_3\}$. Since $\Gamma[e_1,e_2,e_3]$ is connected, it is one of the five graphs in Figure \ref{3edges}.

If $\Gamma[e_1,e_2,e_3]$ is not a simple graph, then it is the first or the second graph in Figure \ref{3edges}. Without loss of generality, we assume that $e_2$ and $e_3$ are parallel edges. Set $\Gamma'_{21}=\Gamma_{21}\cup \{e_2\}$, $\Gamma'_{22}=\Gamma_{22}-e_2$, $\Gamma'_{23}=\Gamma_{23}-e_3$ and $\Gamma'_{2}=\Gamma_{2}-e_3$. Then $\Gamma'_{2}$ has a parity subgraph decomposition $\Gamma'_{2}=\Gamma'_{21}\cup\Gamma'_{22}\cup\Gamma'_{23}$.
By Lemma \ref{parity}, $\Gamma'_{2}$ admits a nowhere-zero $4$-flow. Note that  $\Gamma=\Gamma_{1}\cup\Gamma'_{2}$ and $|E(\Gamma_{1})\cap E(\Gamma'_{2})|=2$.
By Proposition \ref{4common}, $\Gamma$ admits a nowhere-zero $4$-flow.

If $\Gamma[e_1,e_2,e_3]$ is the third graph in Figure \ref{3edges}, then it follows \textbf{Claim A} that $\Gamma_{11}\cup\Gamma_{12}$ has a connected component being a union of two cycles $\Lambda_{1}$ and $\Lambda_{2}$ with $u_1$ as their unique common vertex. Set $\Gamma'_{11}=\Gamma_{11}\triangle\Lambda_{1}$ and $\Gamma'_{12}=\Gamma_{12}\triangle\Lambda_{1}$. Then we obtain a new parity subgraph decomposition $\Gamma_{1}=\Gamma'_{11}\cup\Gamma'_{12}\cup\Gamma_{13}$ of $\Gamma_{1}$. It is straightforward to check that the graph $\Sigma$ being associated with the decompositions
$\Gamma_{1}=\Gamma'_{11}\cup\Gamma'_{12}\cup\Gamma_{13}$ and $\Gamma_{2}=\Gamma_{21}\cup\Gamma_{22}\cup\Gamma_{23}$ is isomorphic to the sixth graph in Figure \ref{sigma}. By what we have proved in paragraph 3, $\Gamma$ admits a nowhere-zero $4$-flow.

Let $\{i,j,k\}=\{1,2,3\}$. If the connected component $\Theta$ of $\Gamma_{2i}\cup\Gamma_{2j}$ containing $e_i$ does not contain $e_j$, then we obtain a new parity subgraph decomposition $\Gamma_{2}=\Gamma'_{2i}\cup\Gamma'_{2j}\cup\Gamma_{2k}$ of $\Gamma_{2}$ where $\Gamma'_{2i}:=\Gamma_{2i}\triangle\Theta$ and $\Gamma'_{2j}:=\Gamma_{2j}\triangle\Theta$. It is straightforward to check that the graph $\Sigma$ being associated with the decompositions
$\Gamma_{1}=\Gamma_{11}\cup\Gamma_{12}\cup\Gamma_{13}$ and $\Gamma_{2}=\Gamma'_{2i}\cup\Gamma'_{2j}\cup\Gamma_{2k}$ is isomorphic to the third graph in Figure \ref{sigma}, which boils down to the case we have dealt with.

From now on, we assume that $\Gamma[e_1,e_2,e_3]$ is the fourth or fifth graph in Figure \ref{3edges} and $\Gamma_{2i}\cup\Gamma_{2j}$ has a connected component containing both $e_i$ and $e_j$ where $1\leq i<j\leq3$. Since $\Gamma_{12}$ and $\Gamma_{13}$ are edge-disjoint parity subgraphs of $\Gamma_1$, $\Gamma_{12}\cup\Gamma_{13}$ is a spanning even subgraph of $\Gamma_1$. Similarly, $\Gamma_{2i}\cup\Gamma_{2j}$ is a spanning even subgraph of $\Gamma_2$. Since $\Gamma=\Gamma_1\cup\Gamma_2$ and $\Gamma_{12}\cup\Gamma_{13}$ and $\Gamma_{2i}\cup\Gamma_{2j}$ have no common edges, $(\Gamma_{12}\cup\Gamma_{13})\cup(\Gamma_{2i}\cup\Gamma_{2j})$ is a spanning even subgraph of $\Gamma$.
To complete the proof, we will firstly prove that there exists a pair of distinct integers $i,j\in\{1,2,3\}$ such that every connected component of
$(\Gamma_{12}\cup\Gamma_{13})\cup(\Gamma_{2i}\cup\Gamma_{2j})$ have even number
of odd vertices of $\Gamma$.
Let $\Theta_{ij}$ be an arbitrary connected component of $(\Gamma_{12}\cup\Gamma_{13})\cup(\Gamma_{2i}\cup\Gamma_{2j})$.
Set
\begin{equation*}
T_{ij}=V(\Theta_{ij})\cap O(\Gamma[e_1,e_2,e_3])~\mbox{and}~\overline{T}_{ij}=V(\Theta_{ij})\cap \overline{O}(\Gamma[e_1,e_2,e_3]).
\end{equation*}
Then
\begin{equation}\label{val}
d_{\Gamma}(u)=
\left\{\begin{array}{ll}
  d_{\Gamma_1}(u)+d_{\Gamma_2}(u), & \hbox{if}~u\in V(\Theta_{ij})-(T_{ij}\cup
  \overline{T}_{ij}); \\
  d_{\Gamma_1}(u)+d_{\Gamma_2}(u)-1, & \hbox{if}~u\in T_{ij}; \\
  d_{\Gamma_1}(u)+d_{\Gamma_2}(u)-2, & \hbox{if}~u\in \overline{T}_{ij}.
   \end{array}
\right.
\end{equation}
Set $\Theta'_{ij}=\Theta_{ij}\cap(\Gamma_{12}\cup\Gamma_{13})$ and $\Theta''_{ij}=\Theta_{ij}\cap(\Gamma_{2i}\cup\Gamma_{2j})$. Then $\Theta'_{ij}$ is a union of some connected components of $\Gamma_{12}\cup\Gamma_{13}$ and $\Theta''_{ij}$ is a union of some connected components of $\Gamma_{2i}\cup\Gamma_{2j}$. By \cite[Lemma 3.3.2]{Z1997}, both $|O_{\Gamma_1}(\Theta'_{ij})|$ and $|O_{\Gamma_2}(\Theta''_{ij})|$ are even.
Set
\begin{equation*}
R_{ij}=O_{\Gamma_1}(\Theta'_{ij})\cup O_{\Gamma_2}(\Theta''_{ij})~\mbox{and}~
S_{ij}=O_{\Gamma_1}(\Theta'_{ij})\cap O_{\Gamma_2}(\Theta''_{ij}).
\end{equation*}
Then $|R_{ij}-S_{ij}|=|O_{\Gamma_1}(\Theta'_{ij})|+
|O_{\Gamma_2}(\Theta''_{ij})|-2|S_{ij}|$.
Therefore $|R_{ij}-S_{ij}|$ is an even integer.
By (\ref{val}) we have
\begin{equation}\label{oddc}
\overline{O}_{\Gamma}(T_{ij})\subseteq R_{ij}-S_{ij}~\mbox{and}~O_{\Gamma}(T_{ij})\subseteq S_{ij}\cup(V(\Theta_{ij})-R_{ij}),
\end{equation}
and if $u\in V(\Theta_{ij})-T_{ij}$, then
\begin{equation}\label{evenc}
  u\in O_{\Gamma}(\Theta_{ij})\Leftrightarrow u\in R_{ij}-S_{ij}.
\end{equation}
If $\Gamma[e_1,e_2,e_3]$ is the fourth graph in Figure \ref{3edges}, then $T_{ij}=\emptyset$ for all $i$ and $j$ with $1\leq i<j\leq3$.
By (\ref{evenc}), $O_{\Gamma}(\Theta_{ij})=R_{ij}-S_{ij}$ and therefore  $|O_{\Gamma}(\Theta_{ij})|$ is even.
Now we assume that $\Gamma[e_1,e_2,e_3]$ is the fifth graph in Figure \ref{3edges}. Without loss of generality, we assume that the ends of $e_1$ are $u_1,u_2$ and the ends of $e_3$ are $u_3,u_4$. Since $\Gamma_{21}\cup\Gamma_{23}$ has a connected component containing both $e_1$ and $e_3$, either $T_{13}=\emptyset$ or $T_{13}=\{u_1,u_4\}$. It follows that
\begin{equation*}
(|O_{\Gamma}(T_{13})|,|\overline{O}_{\Gamma}(T_{13})|)
=(0,0),(1,1),(2,0)~\mbox{or}~(0,2).
\end{equation*}
By (\ref{oddc}) and (\ref{evenc}), we have
\begin{equation*}
O_{\Gamma}(\Theta_{13})=[(R_{13}-S_{13})-\overline{O}_{\Gamma}(T_{13})]
\cup O_{\Gamma}(T_{13})
\end{equation*}
and therefore
\begin{equation*}
|O_{\Gamma}(\Theta_{13})|=
    |R_{13}-S_{13}|-|\overline{O}_{\Gamma}(T_{13})|+
|O_{\Gamma}(T_{13})|.
\end{equation*}
Then, since $|R_{13}-S_{13}|$ is even, we have that  $|O_{\Gamma}(\Theta_{13})|$ is an even integer. Now we have proved that $(\Gamma_{12}\cup\Gamma_{13})\cup(\Gamma_{2i}\cup\Gamma_{2j})$ is a spanning even subgraph of $\Gamma$ and every connected component of
$(\Gamma_{12}\cup\Gamma_{13})\cup(\Gamma_{2i}\cup\Gamma_{2j})$ has even number
of odd vertices of $\Gamma$ for some $i,j\in\{1,2,3\}$.
By Lemma \ref{evenly}, $\Gamma$ admits a nowhere-zero $4$-flow.
\end{proof}
As an application of Theorem \ref{union}, we obtain the following theorem.
\begin{theorem}
\label{4f}
Let $\Gamma=(V,E)$ be a graph. If every edge of $\Gamma$ is contained in a cycle of length at most $4$, then $\Gamma$ admits a nowhere-zero 4-flow.
\end{theorem}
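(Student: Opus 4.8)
The plan is to prove the theorem by assembling $\Gamma$ from short cycles one at a time, invoking Proposition \ref{4common} and Theorem \ref{union} at each stage. Since every edge of $\Gamma$ lies in a cycle of length at most $4$, I may choose, for each edge, one such cycle through it; the resulting finite family $C_1, C_2, \dots, C_m$ consists of cycles of length at most $4$ whose edge sets together cover $E(\Gamma)$. Isolated vertices can be deleted at the outset, as they do not affect the existence of a nowhere-zero flow, so we may assume $V(\Gamma) = \bigcup_{i} V(C_i)$. Each $C_i$ is a cycle, hence admits a nowhere-zero $2$-flow and in particular a nowhere-zero $4$-flow.

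Next I would define $H_1 = C_1$ and $H_i = H_{i-1} \cup C_i$ for $2 \le i \le m$, and show by induction on $i$ that every $H_i$ admits a nowhere-zero $4$-flow; the base case is immediate since $H_1 = C_1$ is a cycle. For the inductive step I take $\Gamma_1 = C_i$ and $\Gamma_2 = H_{i-1}$, so that $\Gamma_1 \cup \Gamma_2 = H_i$ and both factors have nowhere-zero $4$-flows by the cycle fact and the inductive hypothesis. Everything then turns on the common edge set $E(C_i) \cap E(H_{i-1})$. If $E(C_i) \subseteq E(H_{i-1})$ then $H_i = H_{i-1}$ and there is nothing to do. If $|E(C_i) \cap E(H_{i-1})| \le 2$, Proposition \ref{4common} applies directly, since it imposes no condition on how the common edges sit. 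The only remaining possibility is $|E(C_i) \cap E(H_{i-1})| = 3$: here $C_i$ cannot be a cycle of length $2$ or $3$, for then it would already lie in $H_{i-1}$, so $C_i$ is a $4$-cycle and the three shared edges are three of its four edges. Since deleting a single edge from a $4$-cycle leaves a path of length $3$, these three edges induce a connected subgraph of size $3$, and Theorem \ref{union} applies. In all cases $H_i$ admits a nowhere-zero $4$-flow, so $H_m = \Gamma$ does, because the chosen cycles cover $E(\Gamma)$.

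I expect the only delicate point to be the verification in the three-edge case, which is precisely where the strengthening of Proposition \ref{4common} to Theorem \ref{union} is needed: one must observe both that three common edges force $C_i$ to be a genuine $4$-cycle and that any three edges of a $4$-cycle automatically form a connected path, so the connectivity hypothesis of Theorem \ref{union} is met for free. The rest is bookkeeping --- choosing the covering cycles and matching each increment $H_{i-1} \to H_i$ to Proposition \ref{4common} or Theorem \ref{union} --- and presents no real obstacle.
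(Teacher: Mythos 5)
Your proof is correct and follows essentially the same route as the paper: cover $E(\Gamma)$ by cycles of length at most $4$, attach them one at a time, and apply Proposition \ref{4common} when at most two edges are shared and Theorem \ref{union} when three edges are shared (in which case the new cycle must be a $4$-cycle and the three shared edges form a $3$-path, so the connectivity hypothesis holds). The only cosmetic difference is that the paper runs the induction top-down on the size of $\Gamma$ using a minimal covering family, whereas you build the graph up cycle by cycle; the two are equivalent.
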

\begin{proof}
We proceed the proof by induction on the size of $\Gamma$. If $\Gamma$ is a regular graph of degree $2$, then $\Gamma$ clearly admits a nowhere-zero 4-flow. In particular, $\Gamma$ admits a nowhere-zero 4-flow if it is of size $2$. Now we assume that $\Gamma$ is of size greater than $2$ and not a regular graph of degree $2$. Suppose that the theorem is true for graphs of size less than that of $\Gamma$. Since every edge of $\Gamma$ is contained in a cycle of length at most $4$, $\Gamma$ is a union of a number of cycles of length at most $4$. Let $\mathcal{F}$ be a set consisting of cycles of length at most $4$ of $\Gamma$ with minimal cardinality such that $\Gamma=\cup_{\Theta\in\mathcal{F}}\Theta$. Take a fixed $\Theta_{0}\in \mathcal{F}$ and set $\Sigma=\cup_{\Theta\in\mathcal{F}-\{\Theta_{0}\}}\Theta$. Then $\Theta_{0}$ admits a nowhere-zero 4-flow, $\Gamma=\Sigma\cup \Theta_{0}$ and every edge of $\Sigma$ is contained in a cycle in $\Sigma$ of length at most $4$. By the minimality of $\mathcal{F}$, $\Sigma$ is of size less than that of $\Gamma$. Therefore, by induction hypothesis, $\Sigma$ admits a nowhere-zero 4-flow.
Since $\Theta_{0}$ is a cycle of length at most $4$ and not a subgraph of $\Sigma$, $\Theta_{0}$ and $\Sigma$ have at most $3$ common edges. If $\Theta_{0}$ and $\Sigma$ have at most $2$ common edges, then by Corollary \ref{kcommon} $\Gamma$ admits a nowhere-zero $4$-flow.
Now we assume that $\Theta_{0}$ and $\Sigma$ have $3$ common edges. Then $\Theta_{0}$ is a $4$-cycle, and the set of common edges of $\Theta_{0}$ and $\Sigma$ induces a $3$-path in $\Gamma$. By Theorem \ref{union}, $\Gamma$ admits a nowhere-zero $4$-flow.
\end{proof}
It is worth mentioning that the following Catlin's lemma can be deduced from Corollary \ref{kcommon} and Theorem \ref{union} directly.
\begin{lem}
{\rm\cite[Lemma 3.1.18]{Z1997}}
A graph admits a nowhere-zero $4$-flow if it is a union of a cycle of length at most $4$ and a subgraph admitting a nowhere-zero $4$-flow.
\end{lem}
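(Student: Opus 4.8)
The plan is to write $\Gamma=C\cup\Sigma$, where $C$ is the cycle of length at most $4$ and $\Sigma$ is the subgraph admitting a nowhere-zero $4$-flow, and then argue by a short case analysis on the number of edges the two pieces share. The first thing to record is that a cycle always carries a nowhere-zero $2$-flow, hence a nowhere-zero $4$-flow; thus both $C$ and $\Sigma$ are legitimate pieces to feed into Corollary \ref{kcommon} and Theorem \ref{union}. Writing $m:=|E(C)\cap E(\Sigma)|$ and noting that $C$ has at most $4$ edges, so $m\le 4$, I would split the argument according to whether $m\le 2$, $m=3$, or $m$ equals the number of edges of $C$ (the degenerate case $C\subseteq\Sigma$).

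If $C\subseteq\Sigma$, then $\Gamma=\Sigma$ and there is nothing to prove; this already disposes of $m=4$ and of the subcase in which $C$ is a triangle with $m=3$. If $m\le 2$, I would apply Corollary \ref{kcommon} directly with $k=4$, $n=2$, $\Gamma_1=\Sigma$ and $\Gamma_2=C$: these two subgraphs cover $\Gamma$, both admit nowhere-zero $4$-flows, and they share at most $2=k-2$ edges, so $\Gamma$ admits a nowhere-zero $4$-flow. The only remaining possibility is $m=3$ with $C$ a $4$-cycle not contained in $\Sigma$. Here three of the four edges of a $4$-cycle are necessarily consecutive, so $E(C)\cap E(\Sigma)$ induces a path of length $3$, which is connected; I would then invoke Theorem \ref{union} with $\Gamma_1=\Sigma$ and $\Gamma_2=C$ to finish.

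The whole argument is little more than a bookkeeping of cases, so there is no real obstacle beyond checking that every value of $m$ falls under the hypotheses of one of the two cited results. The single point deserving a moment's care is the observation used in the last case: among the four edges of a $4$-cycle, any three are automatically consecutive and therefore induce a connected $3$-path, which is exactly the connectivity condition that Theorem \ref{union} requires.
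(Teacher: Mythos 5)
Your proposal is correct and follows essentially the same route as the paper: dispose of the case $C\subseteq\Sigma$ trivially, apply Corollary \ref{kcommon} when at most two edges are shared, and observe that three shared edges of a $4$-cycle form a connected $3$-path so that Theorem \ref{union} applies. The only difference is cosmetic — you make the degenerate subcases ($m=4$, or $C$ a triangle with $m=3$) slightly more explicit than the paper does.
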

\begin{proof}
Suppose that $\Gamma$ is a union of two subgraphs $\Gamma_{1}$ and $\Gamma_{2}$ where $\Gamma_{1}$ is a cycle of length at most $4$ and $\Gamma_{2}$ admits a nowhere-zero $4$-flow. The lemma is clear true if $\Gamma=\Gamma_{2}$. Now we assume that $\Gamma_{2}$ is a proper subgraph of  $\Gamma$. Since $\Gamma_{1}$ is a cycle of length at most $4$, $\Gamma_{1}$ admits a nowhere-zero $4$-flow and $|E(\Gamma_{1})\cap E(\Gamma_{2})|\leq3$. If $|E(\Gamma_{1})\cap E(\Gamma_{2})|\leq2$, then by Corrolary \ref{kcommon} $\Gamma$ admits a nowhere-zero $4$-flow. If $|E(\Gamma_{1})\cap E(\Gamma_{2})|=3$, then  $E(\Gamma_{1})\cap E(\Gamma_{2})$ induces a $3$-path which is also a connected subgraph of $\Gamma$. By Theoreme \ref{union}, $\Gamma$ admits a nowhere-zero $4$-flow.
\end{proof}
At the end of the paper, we reprove the main result in \cite{IS2003} that every Cartesian bundle of two graphs without isolated vertices has a nowhere-zero $4$-flow by using Theorem \ref{4f}. As a generalisation of the concept of Cartesian product of graphs, the concept of Cartesian graph bundle was introduced in 1982 by Pisanski and Vrabec \cite{PV1982} and subsequently studied by
several authors; see for example \cite{IPZ1997,IS2003,KM1995,KL1990,MPS1988}.
The definition of Cartesian graph bundle is as follows.

\begin{defi}
\label{CB}
Let $\Sigma$ and $\Theta$ be two graphs. A graph $\Gamma$ is a Cartesian graph bundle with fibre $\Theta$ over the base graph $\Sigma$ if there is a mapping
$p$ from $V(\Gamma)\cup E(\Gamma)$ to $V(\Sigma)\cup E(\Sigma)$
satisfying the following conditions:
\begin{enumerate}
  \item It maps adjacent vertices of $\Gamma$ to adjacent or identical vertices in $V(\Sigma)$;
  \item The edges are mapped to edges or collapsed to a vertex;
  \item For every vertex $v\in V(\Sigma)$, $p^{-1}(v)$ is a subgraph of $\Gamma$ which is isomorphic to $\Theta$;
  \item For every edge $e\in E(\Sigma)$ with two ends $u$ and $v$, $p^{-1}(e)\cup p^{-1}(u)\cup p^{-1}(v)$ is a subgraph of $\Gamma$ which is isomorphic to $\mathcal{K}_2\square\Theta$ where $\mathcal{K}_2$ is the complete graph of order $2$.
\end{enumerate}
\end{defi}
\begin{theorem}[Rollov\'a and \v Skoviera]
\label{bundle}
Every Cartesian bundle of two graphs without isolated vertices has a nowhere-zero $4$-flow.
\end{theorem}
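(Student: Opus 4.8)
The plan is to reduce the whole statement to Theorem \ref{4f}: it suffices to show that in a Cartesian graph bundle $\Gamma$ with fibre $\Theta$ over base graph $\Sigma$, where neither $\Theta$ nor $\Sigma$ has an isolated vertex, every edge of $\Gamma$ lies on a cycle of length at most $4$. Once this is established, Theorem \ref{4f} immediately yields a nowhere-zero $4$-flow of $\Gamma$.

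First I would split the edges of $\Gamma$ into two classes according to the projection $p$ of Definition \ref{CB}: the \emph{fibre edges}, which $p$ collapses to a vertex of $\Sigma$ (so their two ends are adjacent and map to the same vertex $v$, whence the edge lies inside a fibre $p^{-1}(v)\cong\Theta$), and the \emph{connecting edges}, whose ends map to two adjacent vertices and which $p$ therefore sends onto an edge of $\Sigma$. The workhorse observation is that in $\mathcal{K}_2\square\Theta$ every edge sits on a $4$-cycle: an edge $\{a,b\}$ of one copy of $\Theta$, together with its mirror edge $\{a',b'\}$ in the other copy and the two rungs $\{a,a'\},\{b,b'\}$, forms the $4$-cycle $a\,b\,b'\,a'$; and a rung $\{a,a'\}$ lies on the $4$-cycle $a\,a'\,b'\,b$ as soon as $a$ has a neighbour $b$ in $\Theta$.

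For a fibre edge $f$ lying in $p^{-1}(v)$, I would use the hypothesis that $\Sigma$ has no isolated vertex to produce an edge $e\in E(\Sigma)$ incident with $v$, say with ends $v$ and $w$. By condition (iv) of Definition \ref{CB}, $p^{-1}(e)\cup p^{-1}(v)\cup p^{-1}(w)$ is isomorphic to $\mathcal{K}_2\square\Theta$, and under this isomorphism $f$ is an edge of one copy of $\Theta$; hence $f$ lies on a $4$-cycle of $\Gamma$. For a connecting edge $g$ with $p(g)=e$, where $e$ has ends $v,w$ and $g$ joins $a\in p^{-1}(v)$ to its partner $a'\in p^{-1}(w)$, I would instead use that $\Theta$ has no isolated vertex: the vertex $a$ of the fibre $p^{-1}(v)\cong\Theta$ has a neighbour $b$ inside that fibre, so in the same $\mathcal{K}_2\square\Theta$ subgraph $g$ is a rung and therefore lies on a $4$-cycle.

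Since every edge of $\Gamma$ is then contained in a cycle of length at most $4$, Theorem \ref{4f} finishes the proof. I expect the only delicate point to be the bookkeeping with the isomorphism in condition (iv)---checking that the mirror vertices $a',b'$ and the rungs $\{a,a'\},\{b,b'\}$ genuinely exist in $\Gamma$ and close up into the claimed $4$-cycle---but this is exactly what the product structure $\mathcal{K}_2\square\Theta$ guarantees, and the two no-isolated-vertex hypotheses are used precisely to supply the required incident edge in each of the two cases.
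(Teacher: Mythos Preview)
Your proposal is correct and follows essentially the same route as the paper's own proof: both show that every edge of the bundle lies in a subgraph isomorphic to $\mathcal{K}_2\square\Theta$ and hence on a $4$-cycle, then invoke Theorem \ref{4f}. Your write-up is in fact slightly more explicit than the paper's in tracking which no-isolated-vertex hypothesis is used for which type of edge, but the argument is the same.
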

\begin{proof}
Let $\Sigma$ and $\Theta$ be two graphs without isolated vertices, and $\Gamma$ a Cartesian graph bundle with fibre $\Theta$ over the base graph $\Sigma$ associated with the mapping $p$. Let $\tilde{e}$ be an arbitrary edge of $\Gamma$. Then either $p(\tilde{e})$ is an edge in $E(\Sigma)$ or a vertex in $V(\Sigma)$. If $p(\tilde{e})$ is an edge $e\in E(\Sigma)$ with two ends $u$ and $v$, then $\tilde{e}$ is contained in $p^{-1}(e)\cup p^{-1}(u)\cup p^{-1}(v)$.  Since $p^{-1}(e)\cup p^{-1}(u)\cup p^{-1}(v)$ is a subgraph of $\Gamma$ isomorphic to $\mathcal{K}_2\square\Theta$ and $\Theta$ has no isolated vertices, every edge of $p^{-1}(e)\cup p^{-1}(u)\cup p^{-1}(v)$ is contained in a $4$-cycle and in particular $\tilde{e}$ is contained in a $4$-cycle. In what follows we assume $p(\tilde{e})$ is a vertex $u\in V(\Sigma)$. Since $\Sigma$ has no isolated vertices, there is a vertex $v\in V(\Sigma)$ and an edge $e\in E(\Sigma)$ linking $u$ and $v$. Therefore $\tilde{e}$ is contained in $p^{-1}(e)\cup p^{-1}(u)\cup p^{-1}(v)$ and it follows that $\tilde{e}$ is contained in a $4$-cycle. Now we have proved that every edge of of $\Gamma$ is contained in a $4$-cycle. By Theorem \ref{4f}, $\Gamma$ admits a nowhere-zero $4$-flow.
\end{proof}

\noindent {\textbf{Acknowledgements}}~~The first author was supported by the Basic Research and Frontier Exploration Project of Chongqing (No.~cstc2018jcyjAX0010) and the Foundation of Chongqing Normal University (21XLB006).
{\small


\begin{thebibliography}{99}
\bibitem{BJN1994}
P.B. Bhattacharya, S.K. Jain and S.R. Nagpaul,
Basic abstract algebra (2nd edition), Cambridge University Press, Cambridge, 1994.
\bibitem{BM2008}
J. A. Bondy, U. S. R. Murty, {\em Graph Theory}, Springer, 2008.
\bibitem{IPZ1997}
W. Imrich, T. Pisanski, J. \v Zerovnik, Recognizing Cartesian graph bundles, \emph{Discrete Math.} 167/168(1997) 393--403.
\bibitem{IS2003}
W. Imrich, R. \v Skrekovski, A theorem on integer flows on Cartesian products of graphs, \emph{J. Graph Theory} 43 (2003) 93--98.
\bibitem{J1979}
F. Jaeger, Flows and generalized coloring theorems in graphs, \emph{J. Combin Theory Ser. B}
26 (1979) 205--216.
\bibitem{KM1995}
S. Klav\v zar, B. Mohar, The chromatic numbers of graph bundles over cycles, \emph{Discrete Math.} 138(1995) 301--314.
\bibitem{KL1990}
J.H. Kwak, J. Lee, Isomorphism classes of graph bundles, \emph{Canad. J. Math.}
42(1990) 747--761.
\bibitem{L1995}
H-.J. Lai, The size of graphs without nowhere-zero $4$-Flows,
\emph{J. Graph Theory} 19(1988) 385--395.
\bibitem{LTWZ2013}
L. M. Lov\'asz, C. Thomassen, Y. Wu and C. -Q. Zhang, Nowhere-zero 3-flows and modulo $k$-orientations, \emph{J. Combin. Theory Ser. B} 103 (2013) 587--598.
\bibitem{MPS1988}
B. Mohar, T. Pisanski, M. \v Skoviera, The maximum genus of graph bundles, \emph{European J. Combin.} 9(1988) 215--224.
\bibitem{PV1982}
T. Pisanski, J. Vrabec, Graph bundles, Preprint Ser. Dep. Math. Univ. Ljubljana 20 (079) (1982) 213--298.
\bibitem{RS2012}
E. Rollov\'a, M. \v Skoviera, Nowhere-zero flows in Cartesian bundles of graphs. \emph{Eur. J. Comb.} 33 (2012), 867--871.
\bibitem{S1981}
P. D. Seymour, Nowhere-zero 6-flows, \emph{J. Combin. Theory Ser. B} 30(1981), 130--135.
\bibitem{SZ2005}
J. Shu, C.-Q. Zhang, Nowhere-zero $3$-flows in products of graphs, \emph{J. Graph Theory} 50 (2005) 79--89.
\bibitem{Th2012}
C. Thomassen, The weak 3-flow conjecture and the weak circular flow conjecture, \emph{J. Combin. Theory Ser. B} 102 (2012) 521--529.
\bibitem{T1954}
 W.T. Tutte, A contribution to the theory of chromatic polynomials, \emph{J. Canad. Math. Soc.} 6 (1954) 80--91.
\bibitem{Z1997}
C.-Q. Zhang, Integer Flows and Cycle Covers of Graphs, Marcel Dekker Inc, New York, 1997.
\end{thebibliography}
\end{document}